\newcommand{\Aut}[1]{\operatorname{Aut}{(#1)}}
\newcommand{\bs}{\backslash}
\newcommand{\Fbdry}{\partial _F G}
\renewcommand{\phi}{\varphi}
\newcommand{\supp}[1]{\operatorname{supp}{#1}}
\newcommand{\Prob}[1]{\operatorname{Prob}{#1}}
\newcommand{\Sub}[1]{\operatorname{Sub}{(#1)}}
\newcommand{\Ad}[1]{\operatorname{Ad}{#1}}
\DeclareMathOperator\Span{span}
\newtheorem{theorem}{Theorem}[section]
\newtheorem{lemma}[theorem]{Lemma}
\newtheorem{proposition}[theorem]{Proposition}
\newtheorem{corollary}[theorem]{Corollary}
\newtheorem*{nntheorem}{Theorem}
\newtheorem{maintheorem}{Theorem}
\theoremstyle{remark}
\theoremstyle{definition}
\newtheorem{example}[theorem]{Example}
\title[Simplicity of crossed products]{Simplicity of crossed products of the actions of totally disconnected locally compact groups on their boundaries}
\author{Ryoya Arimoto}
\address{RIMS, Kyoto University, \mbox{606-8502} Japan}
\email{arimoto@kurims.kyoto-u.ac.jp}
\subjclass{Primary 37A55; Secondary 46L05, 22D25}
\keywords{reduced crossed product, simplicity, Furstenberg boundary, topologically free.}
\begin{document}

\begin{abstract}
We prove that if a totally disconnected locally compact group admits a topologically free boundary, then the reduced crossed product of continuous functions on its Furstenberg boundary by the group is simple.
We also prove a partial converse of this result.
\end{abstract}

\maketitle

\section{Introduction}
One of the fundamental constructions of operator algebras is the construction from groups or group actions and this gives rise to a natural question of which group or group action produces simple operator algebras.
In this paper, we will restrict our attention to the question for the reduced group $\mathrm{C}^*$-algebras and the reduced crossed products.
The reduced group $\mathrm{C}^*$-algebra for a locally compact group $G$ is a norm closure of $\lambda (C_c (G))$ in $B(L^2(G))$ where $\lambda \colon C_c(G) \to B(L^2(G))$ is the left regular representation; $\lambda (f) \xi := f * \xi$ for $f \in C_c(G)$ and $\xi \in L^2(G)$.
For the definition of the reduced crossed products, see Section \ref{Crossedproducts}.

For discrete groups, this question has long been studied.
The first example of $\mathrm{C}^*$-simple groups, i.e., a group whose reduced group $\mathrm{C}^*$-algebra is simple, was found by Powers in 1975 (\cite{Powers}); his result states that the free group of rank two is $\mathrm{C}^*$-simple.
Since then, many groups, including the Baumslag--Solitar groups (\cite{delaHarpePreaux}), were shown to be $\mathrm{C}^*$-simple using Powers' argument.
Finally, in 2017, Kalantar and Kennedy proved that a discrete group is $\mathrm{C}^*$-simple if and only if the group acts on its Furstenberg boundary topologically freely (\cite{KalantarKennedy}).
Breuillard, Kalantar, Kennedy, and Ozawa found that the extremal disconnectedness of the Furstenberg boundary of a discrete group ensures that topological freeness is equivalent to freeness for the action of a discrete group on its Furstenberg boundary (\cite{BreuillardKalantarKennedyOzawa}).

As for the reduced crossed products, in 1990, Kawamura and Tomiyama showed that for an amenable discrete group $\Gamma$ with an action on a compact Hausdorff space $X$, the reduced crossed product $C(X) \rtimes _r \Gamma$ is simple if and only if the action is minimal and topologically free (\cite{KawamuraTomiyama}).
In 1994, Archbold and Spielberg showed that for a discrete group $\Gamma$ with an action on a $\mathrm{C}^*$-algebra $A$, if the action is minimal and topologically free (both notions are adjusted to a non-commutative setting), then $A \rtimes _r \Gamma$ is simple (\cite{ArchboldSpielberg}).
They also extended the result of Kawamura and Tomiyama to amenable actions with simpler proof.
Kalantar and Kennedy showed that for a discrete group $\Gamma$, the simplicity of $C(\partial _F \Gamma) \rtimes _r \Gamma$ implies that the action $\Gamma \curvearrowright \partial _F \Gamma$ on the Furstenberg boundary is topologically free (\cite{KalantarKennedy}).

In conclusion, the following theorem is known.

\begin{nntheorem}[{See \cite[Theorem 6.2.]{KalantarKennedy} and \cite[Theorem 3.1.]{BreuillardKalantarKennedyOzawa}}] \label{KK}
Let $\Gamma$ be a discrete group.
The following conditions are equivalent.
\begin{enumerate}[(i)]
\item The reduced group $\mathrm{C}^*$-algebra $\mathrm{C}^*_{\lambda} ( \Gamma )$ of $\Gamma$ is simple.
\item The reduced crossed product $C( \partial _F \Gamma) \rtimes _r \Gamma$ is simple. \label{A}
\item The action $\Gamma \curvearrowright \partial _F \Gamma$ is free. \label{B}
\item The action of $\Gamma$ on some $\Gamma$-boundary is topologically free. \label{C}
\end{enumerate}
\end{nntheorem}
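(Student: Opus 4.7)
The plan is to establish the cycle
\[
\mathrm{(iii)} \Rightarrow \mathrm{(ii)} \Rightarrow \mathrm{(i)} \Rightarrow \mathrm{(iv)} \Rightarrow \mathrm{(iii)},
\]
using Hamana's theory of $\Gamma$-injective envelopes — in particular the identification of $C(\partial_F \Gamma)$ with the $\Gamma$-injective envelope of $\mathbb{C}$ — together with the minimality and strong proximality of the boundary action $\Gamma \curvearrowright \partial_F \Gamma$.

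For (iii) $\Rightarrow$ (ii), freeness implies topological freeness, and then a classical Archbold--Spielberg style argument applied to the canonical faithful conditional expectation $E \colon C(\partial_F \Gamma) \rtimes_r \Gamma \to C(\partial_F \Gamma)$ shows that every nonzero closed two-sided ideal of the crossed product meets $C(\partial_F \Gamma)$ nontrivially; minimality of $\Gamma \curvearrowright \partial_F \Gamma$ then promotes this intersection to all of $C(\partial_F \Gamma)$, forcing simplicity. For (ii) $\Rightarrow$ (i), I would exploit the natural inclusion $\mathrm{C}^*_\lambda(\Gamma) \hookrightarrow C(\partial_F \Gamma) \rtimes_r \Gamma$ together with the rigidity of the Hamana envelope to show that any nonzero ideal $J \triangleleft \mathrm{C}^*_\lambda(\Gamma)$ generates a proper ideal in the crossed product, contradicting simplicity.

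The step (i) $\Rightarrow$ (iv) is immediate on taking $X = \partial_F \Gamma$, invoking the Kalantar--Kennedy criterion that simplicity of $\mathrm{C}^*_\lambda(\Gamma)$ implies topological freeness of $\Gamma \curvearrowright \partial_F \Gamma$, proved by contrapositive: a fixed-point set of nonempty interior for some $s \in \Gamma \setminus \{e\}$ supplies, via a bump function and a $\Gamma$-equivariant ucp map into the injective envelope of $\mathrm{C}^*_\lambda(\Gamma)$, a nontrivial ideal of the reduced group $\mathrm{C}^*$-algebra.

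Finally, for (iv) $\Rightarrow$ (iii), the universality of $\partial_F \Gamma$ supplies a $\Gamma$-equivariant continuous surjection $\partial_F \Gamma \twoheadrightarrow X$ along which topological freeness pulls back; the Breuillard--Kalantar--Kennedy--Ozawa theorem that $\partial_F \Gamma$ is extremally disconnected — a consequence of $C(\partial_F \Gamma)$ being the injective envelope of $\mathbb{C}$, hence an injective abelian $\mathrm{C}^*$-algebra — then upgrades topological freeness on $\partial_F \Gamma$ to pointwise freeness, since a stabilizer with dense complement in a Stonean space cannot fix any point without contradicting the clopen structure. The main obstacles are the two injectivity steps: (ii) $\Rightarrow$ (i), where Hamana rigidity has to be leveraged at the level of ideals in the crossed product, and (iv) $\Rightarrow$ (iii), where extremal disconnectedness of $\partial_F \Gamma$ is the critical input. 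Both depend strongly on discreteness of $\Gamma$, which is presumably why the present paper must develop separate techniques for the totally disconnected locally compact setting.
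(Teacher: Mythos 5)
This statement is quoted in the paper as known background, with proofs attributed to \cite[Theorem 6.2]{KalantarKennedy} and \cite[Theorem 3.1]{BreuillardKalantarKennedyOzawa}; the paper itself contains no proof of it, so there is nothing internal to compare your attempt against. Judged on its own terms, your outline correctly identifies the main ingredients (Archbold--Spielberg for (iii)$\Rightarrow$(ii), the Hamana/injective-envelope picture of $C(\partial_F\Gamma)$, and Frol\'{\i}k's theorem on fixed-point sets of homeomorphisms of extremally disconnected spaces for (iv)$\Rightarrow$(iii)), but the cycle you chose forces you through the one implication that the literature does not, and arguably cannot easily, prove directly: (ii)$\Rightarrow$(i). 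Your proposed mechanism --- a nonzero ideal $J \triangleleft \mathrm{C}^*_\lambda(\Gamma)$ ``generates a proper ideal in the crossed product'' by ``rigidity of the Hamana envelope'' --- is not an argument: a priori the ideal generated by $J$ in a simple crossed product is everything, and establishing properness is exactly as hard as the whole theorem. The canonical conditional expectation onto $C(\partial_F\Gamma)$ does not control the ideal generated by a subalgebra's ideal, and I do not see what ``rigidity at the level of ideals'' would mean concretely. The actual proofs sidestep this by showing (i)$\Rightarrow$(iv) and (ii)$\Rightarrow$(iv) \emph{separately} (in each case the contrapositive: a fixed-point set with nonempty interior for some $s\neq e$ yields, via the $\Gamma$-injectivity of $C(\partial_F\Gamma)$, a nontrivial ideal of the respective algebra), and (iii)$\Rightarrow$(i) together with (iii)$\Rightarrow$(ii); you should restructure your cycle accordingly, e.g.\ (iv)$\Rightarrow$(iii)$\Rightarrow$\{(i),(ii)\}$\Rightarrow$(iv).

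Two smaller points. First, in (iv)$\Rightarrow$(iii) you say topological freeness ``pulls back'' along the quotient $\partial_F\Gamma \twoheadrightarrow X$; preimages of dense sets under continuous surjections of compact spaces need not be dense, so what you actually use is that the set of points with trivial stabilizer is nonempty (it meets the fibre over any free point of $X$), $\Gamma$-invariant, and hence dense by minimality of $\partial_F\Gamma$. Second, (iii)$\Rightarrow$(i) does not appear in your cycle at all, and simplicity of $C(\partial_F\Gamma)\rtimes_r\Gamma$ does not formally pass to the subalgebra $\mathrm{C}^*_\lambda(\Gamma)$; the step from freeness (or topological freeness) of the boundary action to simplicity of $\mathrm{C}^*_\lambda(\Gamma)$ requires the essentiality/rigidity of the inclusion $\mathrm{C}^*_\lambda(\Gamma)\subset C(\partial_F\Gamma)\rtimes_r\Gamma$ (extend a quotient map of $\mathrm{C}^*_\lambda(\Gamma)$ to a u.c.p.\ map of the crossed product, observe $\mathrm{C}^*_\lambda(\Gamma)$ lies in its multiplicative domain, and use uniqueness of the $\Gamma$-equivariant u.c.p.\ map on $C(\partial_F\Gamma)$). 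Making that precise is the genuinely injectivity-dependent step, and, as you correctly observe at the end, it is exactly what fails to transport to the non-discrete setting of the present paper.
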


Recently, some progress has been made in that question for locally compact groups.
Raum proved that any $\mathrm{C}^*$-simple group is totally disconnected (\cite{Raum}).
The existence of a non-discrete $\mathrm{C}^*$-simple group was confirmed by Suzuki (\cite{Suzuki}) and he also showed that the class of $\mathrm{C}^*$-simple groups is abundant (\cite{Suzuki2}).
Raum also proved that the Schlichting completion of the Baumslag--Solitar group is $\mathrm{C}^*$-simple, using Suzuki's result (\cite{Raumerra}).

Progress from another point of view has also been made.
In 2022, Caprace, Le Boudec, and Matte Bon proved that totally disconnected groups with a piecewise minimal-strongly-proximal action admit a topologically free boundary (\cite{CapraceLeBoudecMatteBon}).
In particular, it is shown that the Neretin groups admit a topologically free boundary (see Section \ref{Neretin} or \cite[Section 4]{CapraceLeBoudecMatteBon} for the Neretin groups).
Recently, Le Boudec and Tsankov showed that a locally compact group admits a topologically free boundary if and only if the group acts freely on its Furstenberg boundary (\cite{LeBoudecTsankov}).
Combining these two results (since the latter result was announced beforehand, this conclusion is written in \cite{CapraceLeBoudecMatteBon}), we obtain that the Neretin groups act freely on their Furstenberg boundary.
This is the first example of a non-discrete compactly generated simple locally compact group acting freely on its Furstenberg boundary.
It is still not known whether the Neretin groups are $\mathrm{C}^*$-simple or not.

Our main results are motivated by the work of Kawamura and Tomiyama (\cite{KawamuraTomiyama}) and the work of Archbold and Spielberg (\cite{ArchboldSpielberg}).

\begin{maintheorem}[See Corollary \ref{maincor}] \label{mainintro}
Let $G$ be a totally disconnected locally compact group and $\Fbdry$ be its Furstenberg boundary.
If $G$ admits a topologically free boundary, then $C(\Fbdry) \rtimes _r G$ is simple.
\end{maintheorem}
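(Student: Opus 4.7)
Write $A := C(\Fbdry)$. My approach is to follow the Kalantar--Kennedy scheme, handling the non-discreteness of $G$ via its compact open subgroups. As a preliminary, I would invoke the Le Boudec--Tsankov theorem (quoted in the introduction) to upgrade the hypothesis ``$G$ admits a topologically free boundary'' to outright freeness of the action $G \curvearrowright \Fbdry$; this action is also minimal (in fact strongly proximal) by construction of the Furstenberg boundary.

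The proof then breaks into three main steps. First, the $G$-injectivity of $A$ produces a $G$-equivariant unital completely positive map $\phi \colon A \rtimes_r G \to A$ restricting to the identity on $A$; a standard multiplicative-domain argument places $A$ inside the multiplicative domain of $\phi$, so $\phi$ is an $A$-bimodule map, hence a conditional expectation. Second, the technical core is to show that $\phi$ is faithful. Third, once faithfulness is established, any nonzero closed ideal $I \triangleleft A \rtimes_r G$ satisfies $\phi(I) \neq 0$; the $A$-bimodularity of $\phi$ makes $\phi(I)$ a nonzero ideal of $A$, and the $G$-equivariance of $\phi$ together with the $G$-invariance of $I$ under inner conjugation makes this ideal $G$-invariant. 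Minimality of $G \curvearrowright \Fbdry$ then forces $\phi(I) = A$, and a standard approximate-unit argument yields $1 \in I$, so $I = A \rtimes_r G$.

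The main obstacle is the faithfulness of $\phi$. In the discrete case one observes, via the $A$-bimodule identity applied to the canonical unitary $u_g$, that $\alpha_g(b)\phi(u_g) = \phi(u_g)b$ for every $b \in A$; since $A$ is commutative this forces $\phi(u_g)$ to be supported on the fixed-point set of $g$, and freeness makes $\phi(u_g)=0$ for $g \neq e$, identifying $\phi$ with the canonical faithful conditional expectation. For a TDLC group the unitaries $u_g$ only live in the multiplier algebra, so $\phi$ cannot be applied to them directly. Instead I would work with the dense $\ast$-subalgebra $C_c(G,A)$ and disintegrate $\phi$ via the states $\psi_x := \mathrm{ev}_x \circ \phi$ for $x \in \Fbdry$, using $A$-bimodularity and freeness to show that each $\psi_x$ is concentrated at the identity of $G$. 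The total disconnectedness of $G$ enters crucially here: choosing a neighborhood basis of $e$ by compact open subgroups $K$, the sub-$\mathrm{C}^*$-algebra $A \rtimes_r K$ carries its own canonical faithful conditional expectation via averaging over the compact group $K$, which I expect can be identified with $\phi|_{A \rtimes_r K}$ by a uniqueness argument; letting $K$ shrink through the neighborhood basis then upgrades faithfulness from each $A \rtimes_r K$ to all of $A \rtimes_r G$.
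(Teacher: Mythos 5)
There is a genuine gap at the very first step. Your scheme rests on the assertion that $C(\Fbdry)$ is $G$-injective, so that the identity on $C(\Fbdry)$ extends to a $G$-equivariant u.c.p.\ map $\phi \colon C(\Fbdry) \rtimes_r G \to C(\Fbdry)$. For discrete groups this is the Kalantar--Kennedy identification $C(\partial_F \Gamma) = I_\Gamma(\C)$, but it is not available for non-discrete locally compact groups, and it genuinely fails in general: a $G$-injective commutative $\mathrm{C}^*$-algebra is in particular injective, hence has extremally disconnected (Stonean) spectrum, whereas e.g.\ for $G = SL_2(\mathbb{R})$ one has $\Fbdry = \mathbb{P}^1(\mathbb{R})$. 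Even restricting to totally disconnected $G$, extremal disconnectedness of $\Fbdry$ (equivalently, the relation between $C(\Fbdry)$ and Hamana's $G$-injective envelope of $\C$) is not established, so the conditional expectation $\phi$ that your entire argument is built on is not known to exist. A secondary problem is the faithfulness step: the ``canonical faithful conditional expectation on $C(\Fbdry) \rtimes_r K$ via averaging over $K$'' does not land in $C(\Fbdry)$ (averaging $\int_K u_k a u_k^*\,dk$ lands in the commutant of $\mathrm{C}^*(K)$), and for an infinite compact $K$ there is no bounded ``evaluation at $e$'' map onto $C(\Fbdry)$; the correct substitute is the expectation $E_K \colon p_K(C(X)\rtimes_r G)p_K \to [C(K\backslash X)]_K$ onto the $K$-invariant functions, which is a different object, so the ``uniqueness argument'' you defer to is not merely routine.

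The paper takes a different route that avoids boundary-injectivity altogether. It first uses Le Boudec--Tsankov (as you do) to get freeness of $G \curvearrowright \Fbdry$, hence freeness of the restricted action of a compact open subgroup $K$. It then proves an intersection property directly: (i) an Exel--Laca--Quigg-type estimate shows that topological freeness forces every nonzero ideal $I$ of $C(X)\rtimes_r G$ to meet $C(X)\rtimes_r K$ nontrivially (Lemma \ref{intersection}, using the corner expectations $E_L$); (ii) for a totally disconnected compact group acting freely, ideals of $C(X)\rtimes_r K$ are classified as $C_0(U)\rtimes_r K$ by reduction to finite quotients $K/L$ and the Kawamura--Tomiyama/Sierakowski theorems (Proposition \ref{ideal}); (iii) saturating $U$ under $G$ and using minimality gives $I = C(X)\rtimes_r G$. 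Note also the paper's Example \ref{exampletopfree}: topological freeness alone does not suffice for the intersection property in the t.d.l.c.\ setting, so the freeness of some compact open subgroup's action is doing essential work that any correct proof must exploit.
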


A converse of Theorem \ref{mainintro} holds under the assumption that the group is exact.

\begin{maintheorem}[See Corollary \ref{conversecor}]
Let $G$ be an exact totally disconnected locally compact group.
If $C( \partial _F G ) \rtimes _r G$ is simple, then the action $G \curvearrowright \partial _F G$ is free.
\end{maintheorem}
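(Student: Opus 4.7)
The plan is to prove the contrapositive. By the Le Boudec--Tsankov theorem referenced in the introduction, the action $G \curvearrowright \partial_F G$ is free if and only if $G$ admits some topologically free $G$-boundary; since $\partial_F G$ is itself a $G$-boundary, it suffices to show that simplicity of $C(\partial_F G) \rtimes_r G$ forces the action on $\partial_F G$ to be topologically free. I would argue this by contradiction: assume there exist $g \in G \setminus \{e\}$ and a non-empty open set $U \subseteq \partial_F G$ which is fixed pointwise by $g$, and aim to construct a non-trivial proper ideal of $C(\partial_F G) \rtimes_r G$.

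The ideal will be exhibited as the kernel of a quasi-regular representation attached to a point stabiliser. Fix $x \in U$ and set $H := G_x$, a closed subgroup of $G$ containing $g$; as $x$ is a Furstenberg boundary point, $H$ is amenable. Consider the covariant representation of $(C(\partial_F G), G)$ on $L^2(G/H)$ obtained by letting $C(\partial_F G)$ act by multiplication through the orbit map $yH \mapsto y \cdot x$ and letting $G$ act by left translation of cosets. Amenability of $H$ together with exactness of $G$ ensures, through the Anantharaman--Delaroche induction machinery for closed subgroups, that this covariant representation descends to a $\ast$-representation $\pi$ of the reduced crossed product $C(\partial_F G) \rtimes_r G$.

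Using the TD.LC structure of $G$, I would then pick a compact open subgroup $K$ of $G$ with $g \notin K$, a non-zero $f \in C_c(U)$, and form the element
\[
a := \int_{gK} f u_h \, dh - \int_K f u_h \, dh \in C(\partial_F G) \rtimes_r G,
\]
where $u_h$ denotes the canonical unitary implementing $h$ in the multiplier algebra. Since $gK$ and $K$ are disjoint compact open sets and $f \neq 0$, the element $a$ is non-zero in the underlying $C_c(G, C(\partial_F G))$, hence in the reduced crossed product. On the other hand, for any coset $yH$ with $y \cdot x \in U$ one has $y^{-1} g y \in H$, and a direct change-of-variable computation then shows that the two integrals act identically on $L^2(G/H)$ once multiplied by the function $y H \mapsto f(y \cdot x)$, whence $\pi(a) = 0$. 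Since $\pi(1) \neq 0$, the ideal $\ker \pi$ is non-trivial and proper, contradicting simplicity.

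The main obstacle is the verification that $\pi$ genuinely descends to the reduced crossed product; this is where the exactness hypothesis on $G$ is used essentially, via the Anantharaman--Delaroche induction machinery combined with amenability of Furstenberg stabilisers. Once this is in hand, the element $a$ above is a natural TD.LC analogue of the classical discrete-case element $f u_g - f$ (to which it specialises when $G$ is discrete and $K = \{e\}$), and the remaining verifications are routine.
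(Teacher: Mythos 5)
Your overall strategy --- induce a covariant representation on $L^2(G/G_x)$ for a point with non-trivial stabiliser, use amenability coming from exactness to make it factor through the reduced crossed product, and exhibit a non-zero element in its kernel --- is exactly the paper's strategy (Theorem \ref{converse} together with Proposition \ref{function}). However, two of your steps have genuine gaps, and they are precisely the points where the locally compact setting differs from the discrete one.

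First, your reduction from ``not topologically free'' to ``there exist $g \neq e$ and a non-empty open $U$ fixed pointwise by $g$'' is unjustified. The negation of topological freeness only gives a non-empty open set on which every point has \emph{some} non-trivial stabilising element; extracting a single $g$ whose fixed-point set has non-empty interior is the Baire-category step that works for countable discrete groups but fails (or at least requires a new argument) when the union $\bigcup_{g\neq e}\operatorname{Fix}(g)$ runs over an uncountable group. The paper sidesteps this entirely: Proposition \ref{function} only needs \emph{one} point $x$ with $G_x \neq \{e\}$, and compensates by using the continuity of the stabilizer map and Glimm's continuous field of Haar measures $\{\mu_y\}$ to build a function whose average over \emph{every} stabiliser $G_y$ vanishes.

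Second, the claimed vanishing $\pi(a)=0$ for $a = (fu_g - f)\,\mu(K)p_K$ ignores the Radon--Nikodym cocycle of the quasi-invariant measure on $G/G_x$. Writing $h = y^{-1}g^{-1}y \in G_x$ for $y\cdot x \in U$, one computes
\[
[\pi(u_g)\xi](yG_x) = \Bigl(\tfrac{\Delta_{G_x}(h)}{\Delta_G(h)}\Bigr)^{\frac{1}{2}}\,\xi(yG_x),
\]
so the coset is indeed preserved but a factor $\bigl(\Delta_{G_x}(h)/\Delta_G(h)\bigr)^{1/2}$ survives, and there is no reason for $\Delta_G$ to restrict to $\Delta_{G_x}$ on a closed (non-open, non-normal) subgroup. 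This is exactly why the paper's kernel element is not the naive $fu_g - f$ but the function $f_0$ of Proposition \ref{function}, engineered so that $\int_{G_y}[f_0(sh)](y)\,\Delta_G(h)^{1/2}\Delta_{G_y}(h)^{-1/2}\,d\mu_y(h) = 0$ for all $s$ and $y$; the modular weights there are precisely the correction your computation is missing. Your descent argument for $\pi$ (amenability of the stabiliser plus exactness) is workable but less direct than the paper's, which simply uses that exactness makes the whole action $G \curvearrowright \partial_F G$ amenable, so the full and reduced crossed products coincide and every covariant representation factors through $C(\partial_F G)\rtimes_r G$.
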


Theorem \ref{mainintro} applies to two examples; the Neretin groups $\mathcal{N}_{d,k}$ and the HNN extensions, especially the Schlichting completions of the Baumslag--Solitar groups.
In particular, our result states that $C(\partial _F \mathcal{N}_{d,k}) \rtimes _r \mathcal{N}_{d,k}$ is simple where $\mathcal{N}_{d,k}$ is the Neretin group.

\subsection*{Acknowledgements}
The author is grateful to Professor Narutaka Ozawa for his valuable comments.
This work was supported by Grant-in-Aid for JSPS Fellows Grant Number JP23KJ1186 and by JSPS KAKENHI Grant Numbers JP20H01806 and JP24K00527.

\section{Preliminaries}
\subsection{Totally disconnected locally compact groups}
In this paper, locally compact groups are assumed to be second countable.
A \textbf{totally disconnected locally compact group} is a topological group whose underlying topological space is totally disconnected and locally compact.
By van Dantzig's theorem (\cite{vanDantzig}), a locally compact group is totally disconnected if and only if it admits a neighborhood basis of identity consisting of compact open subgroups.

\subsection{Group actions and the Furstenberg boundary}\label{groupactions}
Let $G$ be a locally compact group.
For a compact Hausdorff space $X$, $\Prob X$ denotes the space of Radon probability measures on $X$ with weak-$*$ topology.
We identify elements of $X$ with the corresponding Dirac measures in $\Prob{X}$.
An action $G \curvearrowright X$ of $G$ on a compact Hausdorff space $X$ is said to be \textbf{minimal} if there is no non-trivial $G$-invariant closed subset.
It is said to be \textbf{strongly proximal} if $\overline{G. \mu}$ contains a element of $X$ for every $\mu \in \Prob{X}$.
It is said to be \textbf{topologically free} if the subset $\{ x \in X \mid G_x = \{ e \} \}$ is dense in $X$.

A compact $G$-space is called a \textbf{$G$-boundary} if the action is minimal and strongly proximal.
The \textbf{Furstenberg boundary} $\Fbdry$ of $G$ is the universal $G$-boundary, that is, for every $G$-boundary $X$, there is a (unique) $G$-quotient map $\Fbdry \twoheadrightarrow X$.
For any locally compact groups, such a boundary exists  (see \cite[Proposition 4.6.]{Furstenberg}).

\subsection{Crossed products}\label{Crossedproducts}
Let $G$ be a locally compact group with a fixed left Haar measure $\mu$, $\Delta$ be the modular function of $G$, and $X$ be a compact $G$-space.
Then $C(X)$ is a $G$-$\mathrm{C}^*$-algebra with the action $\alpha$ defined by $[ \alpha _g f ] (x) = f(g^{-1}.x)$.
Let $C_c (G , C(X))$ be the set of continuous functions $f$ from $G$ to $C(X)$ whose support $\overline{ \{ g \in G \mid f(g) \neq 0 \} }$ is compact.
This becomes a $*$-algebra by the following operations:
\begin{align*}
f_1 * f_2 (g) &:= \int _G f_1(h) \alpha _h (f_2  (h^{-1} g)) \, dh,  \ f_1, f_2 \in C_c (G,C( X )), \\
f^*(g) &:= \Delta(g^{-1}) \alpha _g (f(g^{-1})^*),  \ f \in C_c (G,C( X )).
\end{align*}
For a faithful representation $\pi \colon C(X) \hookrightarrow B(H)$, define a $*$-representation $\Pi \colon C_c (G,C(X)) \to B(L^2 (G) \otimes H)$ by 
\[
\Pi(f) \xi (g) = \int _G \pi (\alpha _{g^{-1}} (f(h))) \xi(h^{-1}g) \, dh,
\]
here we identify $L^2(G) \otimes H$ with $L^2(G,H)$ and $\xi \in L^2(G,H)$.
The \textbf{reduced crossed product} $C(X) \rtimes _r G$ of $C(X)$ by $G$ is defined to be $\overline{\Pi (C_c (G,C(X)))}^{\| \cdot \|} \subset B(L^2(G) \otimes H)$.
This definition does not depend on the choice of a faithful representation $\pi \colon C(X) \hookrightarrow B(H)$.

For a compact open subgroup $K$ of $G$, $p_K$ denotes a projection $\displaystyle \frac{1}{\mu (K)} \chi _K$ in $C(X) \rtimes _r G$.
A net $(p_K) _{K<G:\text{compact open subgroup}}$ of projections forms an approximate unit for $C(X) \rtimes _r G$.
Note that for a compact open subgroup $K < G$, there is a canonical embedding $C(X) \rtimes _r K \subset C(X) \rtimes _r G$.
For the $K$-invariant function $f$ on $X$, $[f]_K \in C(X) \rtimes _r K$ denotes a constant function on $K$ whose value is $\displaystyle \frac{1}{\mu (K)} f$.
There is a faithful conditional expectation $E_K \colon p_K (C(X) \rtimes _r G) p_K \to [C( K \bs X )] _K$ such that $E_K (p_K f p_K) = (p_K f p_K) |_{K}$ for $f \in C_c (G,C(X))$ (see \cite[Lemma 2.1.]{Suzuki2}).

\subsection{Amenable actions}
We review the definition and properties of amenable actions (see \cite{Delaroche} for details).
The action of a locally compact group $G$ on a compact Hausdorff space $X$ is said to be \textbf{amenable} if there is a net $(m_i) _i$ of continuous maps from $X$ to $\Prob{(G)}$ such that $\lim _i \| g.m_i^x - m_i^{gx} \| _1 = 0$ uniformly on compact subsets of $X \times G$ where $\Prob{(G)}$ is the set of probability measures on $G$ equipped with the weak-$*$ topology.
If the action $G \curvearrowright X$ is amenable, then the full crossed product $C(X) \rtimes G$ and the reduced crossed product $C(X) \rtimes _r G$ coincide (\cite[Theorem 5.3.]{Delaroche}).

A locally compact group is said to be \textbf{amenable at infinity} if it admits an amenable action on a compact space.
In \cite[Theorem A]{BrodzkiCaveLi}, this property is shown to be equivalent to exactness for second countable locally compact groups.
Thus, by \cite[Proposition 3.4.]{Delaroche}, exactness is equivalent to the amenability of the action $G \curvearrowright \beta ^{\textrm{lu}} G$ where $\beta ^{\textrm{lu}} G$ is the Gelfand spectrum of $C^{\textrm{lu}}_{\textrm{b}}(G)$, the $\mathrm{C}^*$-algebra of bounded left uniformly continuous functions on $G$.
Using $C^{\textrm{lu}}_{\textrm{b}}(G)$, instead of $\ell ^{\infty} (G)$, we have the following theorem by the same way as \cite[Theorem 4.5.]{KalantarKennedy}.

\begin{theorem}\label{Furstenbergamenable}
If a locally compact group $G$ is exact, then the action $G \curvearrowright \partial _F G$ of $G$ on its Furstenberg boundary is amenable.
\end{theorem}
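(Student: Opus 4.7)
\emph{Plan.} I would follow the Kalantar--Kennedy proof of their Theorem 4.5 in \cite{KalantarKennedy}, substituting $C^{\textrm{lu}}_{\textrm{b}}(G)$ for $\ell^{\infty}(G)$ and $\beta^{\textrm{lu}} G$ for $\beta G$ throughout. Exactness of $G$ gives, by the discussion preceding the statement, that $G \curvearrowright \beta^{\textrm{lu}} G$ is amenable; the goal is to push this amenability down to $G \curvearrowright \partial _F G$.

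The push-down proceeds through a $G$-equivariant unital completely positive (ucp) map $\Phi \colon C^{\textrm{lu}}_{\textrm{b}}(G) \to C(\partial _F G)$. To produce $\Phi$, fix $x_0 \in \partial _F G$. For every $\phi \in C(\partial _F G)$, the function $g \mapsto \phi(g.x_0)$ is bounded and left uniformly continuous (the action is jointly continuous and $\partial _F G$ is compact), so the pullback yields a $G$-equivariant embedding $C(\partial _F G) \hookrightarrow C^{\textrm{lu}}_{\textrm{b}}(G)$. By the $G$-injectivity of $C(\partial _F G)$---the locally compact analogue of the Hamana--Kalantar--Kennedy injective envelope---this embedding admits a $G$-equivariant ucp retraction $\Phi$.

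The witness for amenability on $\beta^{\textrm{lu}} G$ is a net $(\mu_i)$ of continuous maps $\beta^{\textrm{lu}} G \to \Prob{(G)}$ with $\|g . \mu_i^x - \mu_i^{g.x}\|_1 \to 0$ uniformly on compacts of $\beta^{\textrm{lu}}G \times G$. Viewing each $\mu_i$ as a ucp map $\hat{\mu}_i \colon C_0(G) \to C^{\textrm{lu}}_{\textrm{b}}(G)$ via $[\hat{\mu}_i(f)](x) = \int _G f\, d\mu_i^x$, set $\tilde{\mu}_i := \Phi \circ \hat{\mu}_i$; this corresponds to a continuous map $\tilde{m}_i \colon \partial _F G \to \Prob{(G)}$ (full probability rather than subprobability follows from unitality of $\Phi$ applied to a suitable $C_c(G)$-approximate unit). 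The $G$-equivariance of $\Phi$ then transfers the approximate equivariance of $\mu_i$ directly to $\tilde{m}_i$, yielding amenability of $G \curvearrowright \partial _F G$.

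The main obstacle I anticipate is establishing the locally compact $G$-injectivity of $C(\partial _F G)$ in the category of $G$-operator systems with point-norm continuous actions---precisely the category in which $C^{\textrm{lu}}_{\textrm{b}}(G)$ naturally sits, and the reason it is the correct replacement for $\ell^{\infty}(G)$. Once this injective envelope result is in hand, the rest of the argument is a routine adaptation of the discrete case, with the left uniform continuity of orbit maps being precisely what is needed to keep the transferred amenability witnesses continuous.
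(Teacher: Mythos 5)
Your plan is exactly the paper's intended argument: the paper offers no further detail beyond ``the same way as \cite[Theorem 4.5.]{KalantarKennedy} with $C^{\textrm{lu}}_{\textrm{b}}(G)$ in place of $\ell^{\infty}(G)$,'' and your expansion---exactness giving amenability of $G \curvearrowright \beta^{\textrm{lu}}G$, the orbit-map embedding $C(\partial_F G) \hookrightarrow C^{\textrm{lu}}_{\textrm{b}}(G)$, a $G$-equivariant ucp retraction from $G$-injectivity of $C(\partial_F G)$, and transfer of the amenability witnesses---is precisely that adaptation. You also correctly identify the one nontrivial input, the locally compact analogue of the Hamana--Kalantar--Kennedy $G$-injectivity of $C(\partial_F G)$, so the proposal matches the paper's route.
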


\subsection{The Chabouty topology and the stabilizer map}
For a locally compact group $G$, $\Sub{G}$ denotes the set of closed subgroups of $G$ equipped with the Chabauty topology; this topology is defined by the basis 
\[
\{ H \in \Sub{G} \mid H \cap K = \emptyset, H \cap U_1 \neq \emptyset, \ldots , H \cap U_n \neq \emptyset  \}
\]
where $K \subset G$ is a compact subset and $U_1 , \ldots U_n \subset G$ are open subsets.
Given a compact $G$-space $X$, one can obtain the stabilizer map $X \ni x \mapsto G_x \in \Sub{G}$.
This map is not necessarily continuous.
However, the stabilizer map associated with the Furstenberg boundary is continuous (\cite[Corollary 1.3.]{LeBoudecTsankov}).

The following lemma follows from \cite[Appendix]{Glimm}.

\begin{lemma}\label{measure}
Let $G$ be a locally compact group and $X$ be a compact $G$-space.
Assume that the stabilizer map $X \ni x \mapsto G_x \in \Sub{G}$ is continuous.
Then there is a family of measures $\{ \mu _x \} _{x \in X}$ such that
\begin{itemize}
\item for each $x \in X$, $\mu _x$ is a left Haar measure on $G_x$,
\item for every $f \in C_c (G)$, a map $x \mapsto \int _{G_x} f(a) \, d\mu _x (a)$ is continuous.
\end{itemize}
\end{lemma}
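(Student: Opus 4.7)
The plan is to invoke the construction in the appendix of \cite{Glimm}, where a continuous selection of Haar measures is produced for a continuously varying family of closed subgroups. Since the Haar measure on each $G_x$ is unique up to a positive scalar, the entire issue is to fix a normalization that is compatible with the Chabauty topology on $\Sub{G}$.

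The first step is to choose a non-negative $\phi \in C_c(G)$ with $\phi(e) > 0$. For any closed subgroup $H \le G$ and any left Haar measure $\lambda$ on $H$, the continuity of $\phi$ together with $e \in H$ forces $\int_H \phi \, d\lambda > 0$. For each $x \in X$, I pick any left Haar measure $\lambda_x$ on $G_x$ and define
$$\mu_x := \frac{1}{\int_{G_x} \phi(a) \, d\lambda_x(a)} \, \lambda_x,$$
which is still a left Haar measure on $G_x$, now normalized so that $\int_{G_x} \phi \, d\mu_x = 1$. This defines the family $\{\mu_x\}_{x \in X}$ unambiguously.

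It remains to show that for every $f \in C_c(G)$ the map $x \mapsto \int_{G_x} f \, d\mu_x$ is continuous, and this is the content of Glimm's argument. Sketching it: if $x_n \to x$ in $X$, then $G_{x_n} \to G_x$ in the Chabauty topology by hypothesis, so elements of $G_x$ arise as limits of sequences $h_n \in G_{x_n}$, and every cluster point of a sequence $h_n \in G_{x_n}$ lies in $G_x$. Standard estimates in the Chabauty topology together with the normalization on $\phi$ imply that the restrictions of $\{\mu_{x_n}\}$ to any compact subset of $G$ are uniformly bounded; passing to a weak-$*$ cluster point $\nu$ on $G$, one verifies that $\nu$ is supported on $G_x$, is left $G_x$-invariant, and satisfies $\int \phi \, d\nu = 1$, hence $\nu = \mu_x$ by uniqueness of Haar measure. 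This yields $\mu_{x_n} \to \mu_x$ weak-$*$ against $C_c(G)$.

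The main obstacle is the verification that any weak-$*$ cluster point $\nu$ of $\{\mu_{x_n}\}$ is left $G_x$-invariant. This step is where the Chabauty topology is essential: given $h \in G_x$, one must produce approximations $h_n \in G_{x_n}$ converging to $h$, use the left $G_{x_n}$-invariance of $\mu_{x_n}$ against translated test functions, and then pass to the limit uniformly in a way that respects the support condition. The support conclusion $\operatorname{supp}\nu \subseteq G_x$ uses the complementary half of the Chabauty convergence, namely that a net of elements of $G_{x_n}$ can only cluster inside $G_x$. Once these two ingredients are in hand, uniqueness of Haar measure closes the argument.
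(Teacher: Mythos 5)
Your proposal matches the paper exactly: the paper offers no proof beyond the citation ``follows from \cite[Appendix]{Glimm}'', and your normalization via a fixed $\phi\in C_c(G)$ with $\phi(e)>0$ followed by the vague-compactness/cluster-point argument is precisely Glimm's construction. The sketch is sound, including the two Chabauty ingredients (approximating elements of $G_x$ from $G_{x_n}$ for invariance, and the no-extraneous-cluster-points half for the support claim), so nothing further is needed.
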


\subsection{Neretin groups}\label{Neretin}
In this subsection, we briefly recall the definition of the Neretin groups.
We refer \cite[Section 4]{CapraceLeBoudecMatteBon} and references therein to the reader for the definition and properties of the Neretin groups.
Let $d , k \geq  2$ be integers and $T_{d , k}$ be a rooted tree 
such that the root has $k$ descendants and the others have $d$ descendants.
An \textbf{almost automorphism} of $T_{d , k}$ is a triple $(A, B, \phi)$
where $A, B \subset T_{d , k}$ are finite subtrees containing the root with $| \partial A| = | \partial B |$ and $\phi \colon T_{d , k} \setminus A \rightarrow T_{d , k} \setminus B$ is an isomorphism.
The \textbf{Neretin group} $\mathcal{N}_{d, k}$ is the quotient of the set of all almost automorphisms by the relation which identifies two almost automorphisms $(A_1, B_1, \phi_1), \, (A_2, B_2, \phi_2)$ if there exists a finite subtree $\tilde{A} \subset T_{d , k}$ containing the root such that $A_1 , \, A_2 \subset \tilde{A}$ and $\phi_1 | _{T_{d , k} \setminus \tilde{A}} = \phi_2 | _{T_{d , k} \setminus \tilde{A}}$.
One can easily check that $\mathcal{N}_{d, k}$ is a group.

The Neretin group $\mathcal{N}_{d, k}$ admits a totally disconnected locally compact group topology such that the inclusion map $\Aut{T_{d, k}} \hookrightarrow \mathcal{N}_{d, k}$ is continuous and open.

\subsection{HNN extensions}
We recall and fix notations of HNN extensions and its Bass--Serre trees.
Let $H$ be a locally compact group, $H_0$ be its open subgroup, and $\theta \colon H_0 \hookrightarrow H$ be an open, injective, continuous homomorphism.
Then the HNN extension $G = \textrm{HNN} ( H, H_0, \theta )$ corresponding $H, H_0 , \text{and } \theta$ is defined to be $\langle H , t \mid t h_0 t^{-1} = \theta (h_0) \text{ for } h_0 \in H_0 \rangle$.
If systems of representatives $S_{-1}$ and $S_1$ of $H/H_0$ and $H / \theta (H_0) $ respectively such that $e \in S_1$ and $e \in S_{-1}$ are fixed, then any element $g$ of $G$ is represented as the unique normal form $g = g_1 t^{\varepsilon _1} g_2 t^{\varepsilon _2} \cdots g_n t^{\varepsilon _n} g_{n+1}$ where
\begin{itemize}
\item $\varepsilon _i \in \{ \pm 1 \}$ and $g_{n+1} \in H$,
\item $g_i \in S_{\varepsilon _i}$,
\item If $g_i = 1$, then $\varepsilon _{i-1} = \varepsilon _i$.
\end{itemize}
This ensures that the canonical homomorphism $H \to G$ is an embedding.
There is a unique topology on $G$ such that the embedding $H \hookrightarrow G$ is continuous and open (\cite[Proposition 8.B.10.]{CornulierdelaHarpe}).
A Bass--Serre tree associated with $G$ is as follows; 
the vertex set is $G / H$, the edge set is $G / H_0$, and an edge $g H_0 \in G / H_0$ connects $g H$ and $g t^{-1} H$.
The HNN extension acts on its Bass--Serre tree by left translation.

\begin{example}
Let $m, n$ be integers with $2 \leq |m| \leq n$.
The Baumslag--Solitar group $\mathrm{BS}(m,n)$ is defined as $\langle a , t \mid t a^m t^{-1} = a^n \rangle$; an HNN extension corresponding to $\mathbb{Z} = \langle a \rangle , m \mathbb{Z} = \langle a^m \rangle, \text{ and } \theta ( a^{m} ) = a^n$.
Its Bass--Serre tree is $(|m|+n)$-regular tree $T_{|m|+n}$.
Let $\mathrm{G} (m, n)$ be a closure of $\mathrm{BS}(m,n)$ in $\Aut{T_{|m|+n}}$.
This group is also an HNN extension, corresponding to $\overline{\langle a \rangle}, \overline{\langle a^m \rangle}, \text{ and } \overline{\theta} (h_0) = t h_0 t^{-1} \text{ for } h_0 \in \overline{\langle a^m \rangle}$. 
Note that it is shown by Raum that this group is $\mathrm{C}^*$-simple (\cite{Raumerra}).
\end{example}

Let $G = \textrm{HNN}(H, H_0, \theta)$ be an HNN extension.
If $1<[H: H_0]<\infty$ and $1<[H: \theta(H_0)] < \infty$, then $\partial T$ is closed in $T \sqcup \partial T$ with the topology defined as in \cite[Section 5.2.]{BrownOzawa} and the action of $G$ on its Bass--Serre tree $T$ is minimal, i.e., there is no non-trivial $G$-invariant subtree, and of general type, i.e., there exist two hyperbolic elements in $G$ without common fixed points in $\partial T$ (see \cite[Proposition 19.]{delaHarpePreaux}).
Therefore, the induced action of $G$ on the boundary of the Bass--Serre tree $\partial T$ is a boundary action.

\section{Main theorems and examples}
In this section, we will prove the main theorems and see examples.

\begin{theorem}\label{main}
Let $G$ be a totally disconnected locally compact group and $X$ be a compact $G$-space.
Assume that the action $G \curvearrowright X$ is topologically free and there exists a compact open subgroup $K$ of $G$ such that the restricted action $K \curvearrowright X$ is free.
Then the action $G \curvearrowright X$ has the intersection property, that is, for every non-zero ideal $I$ in $C(X) \rtimes _r G$, there exists a non-zero $G$-invariant ideal $J$ in $C(X)$ such that $J \rtimes _r G \subset I$.
\end{theorem}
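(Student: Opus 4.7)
The plan is to adapt the classical Archbold--Spielberg argument for topologically free actions to the totally disconnected locally compact setting, using the cornering projections $p_{K'}$ and the faithful conditional expectations $E_{K'}$ recalled in Section \ref{Crossedproducts}.

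First I set up the cornering. Given a non-zero closed two-sided ideal $I$ of $C(X) \rtimes _r G$, since $(p_{K'})$ is an approximate unit, one can find a compact open subgroup $K' < K$ with $p_{K'} I p_{K'} \neq 0$; the restricted action $K' \curvearrowright X$ inherits freeness from $K \curvearrowright X$. Pick a positive $a \in p_{K'} I p_{K'}$ with $\|a\|=1$. By faithfulness of $E_{K'}$, $E_{K'}(a) = [h]_{K'}$ for some non-zero $h \in C(K' \bs X)$; after rescaling we assume $\|h\|_\infty = 1$. For a small $\epsilon > 0$, I approximate $a$ within $\epsilon$ by $\tilde a \in p_{K'} C_c(G,C(X)) p_{K'}$ whose $G$-support meets only finitely many $K'$-double cosets $K',K'g_1K',\dots,K'g_nK'$, with its $K'$-component equal to $[h]_{K'}$.

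Next I use topological freeness of $G \curvearrowright X$ to pick $x_0 \in X$ with $G_{x_0}=\{e\}$ and $h(x_0)$ close to $1$. Since each $K'g_iK'$ is compact and disjoint from $\{e\}$, and since the free compact action $K' \curvearrowright X$ admits a local slice at $x_0$, a tube-lemma argument yields a $K'$-invariant open neighborhood $V$ of the orbit $K'.x_0$ with $(K'g_iK')\cdot V \cap V = \emptyset$ for every $i$. Choosing $\phi \in C(X)^{K'}$ with $0 \le \phi \le 1$, $\phi(x_0)=1$, and $\supp{\phi} \subset V$, a direct convolution computation gives
\[
[\phi]_{K'}\,\tilde a\,[\phi]_{K'} \;=\; [\phi h \phi]_{K'},
\]
because each off-diagonal contribution $[\phi]_{K'} \tilde a_i [\phi]_{K'}$ is supported on $G$-values $g \in K'g_iK'$ where the integrand forces an $x$ lying in $V \cap (K'g_iK')\cdot V = \emptyset$. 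Setting $c := [\phi]_{K'} a [\phi]_{K'} \in I$ and $d := [\phi h \phi]_{K'}$ in $B := [C(K' \bs X)]_{K'}$, we obtain $\|c - d\| \le \epsilon$ while $\|d\| \ge \phi(x_0)^2 h(x_0)$ is close to $1$.

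Finally, let $\pi \colon C(X) \rtimes _r G \to (C(X) \rtimes _r G)/I$ be the quotient. Since $\pi(c)=0$, $\|\pi(d)\| \le \|c-d\| \le \epsilon < \|d\|$, so $\pi|_B$ is not isometric; but any $*$-homomorphism between $\mathrm{C}^*$-algebras is isometric iff injective, so $\pi|_B$ has non-zero kernel. This yields a non-zero positive $f \in C(K' \bs X) \subset C(X)$ with $[f]_{K'} \in I$, and taking $J$ to be the $G$-invariant ideal of $C(X)$ generated by $f$, the inclusion $J \rtimes _r G \subset I$ follows from a standard argument using that $I$ is closed under conjugation by the canonical unitaries $u_g$, that $C(X)$ embeds into $M(C(X) \rtimes _r G)$, and that $(p_{K''})$ is an approximate unit. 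The hardest step is the construction of the neighborhood $V$: it is precisely where both hypotheses of the theorem enter, as topological freeness produces $x_0$ with trivial $G$-stabilizer while freeness of $K$ (and hence $K'$) on $X$ supplies a $K'$-invariant tubular neighborhood of $K'.x_0$, and the compactness of each $K'g_iK'$ permits a simultaneous tube-lemma construction for all finitely many double cosets.
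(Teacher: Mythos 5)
Your compression argument down to the production of a non-zero positive $f \in C(K' \backslash X)$ with $[f]_{K'} \in I$ is essentially the paper's Lemma \ref{intersection}, and that part is sound. One remark: the neighborhood $V$ does not require freeness of $K' \curvearrowright X$ or any slice theorem. The paper gets the $K'$-invariant sets $U_{g_i}$ with $U_{g_i} \cap g_i U_{g_i} = \emptyset$ by saturating a neighborhood basis of $x_0$ under $K'$ and passing to convergent subnets in the compact group $K'$, using only $G_{x_0} = \{e\}$; intersecting over the finitely many $i$ gives $V$. So up to this point your argument uses only topological freeness of $G \curvearrowright X$.

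The genuine gap is the final step. From $f p_{K'} \in I$ you can deduce $f'(g.f)\, p_{gK'g^{-1}} \in I$ by conjugating and multiplying by elements of $C(X) \subset M(C(X) \rtimes_r G)$, but you cannot deduce $f' p_L \in I$ for \emph{smaller} compact open subgroups $L < K'$: since $p_{K'} \leq p_L$, the element $f p_{K'} = (f p_L) p_{K'}$ is a compression of $f p_L$, and ideals are not closed under ``decompression.'' As $J \rtimes_r G$ is generated by elements $f' p_L$ with $L$ arbitrarily small (this is exactly how the paper's proof of Theorem \ref{main} reduces the inclusion $C_0(V) \rtimes_r G \subset I$), the claimed ``standard argument'' does not close. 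This is not a repairable technicality: Example \ref{exampletopfree} gives a topologically free action of a compact totally disconnected group $K$ on $X$ and a proper ideal $I \lhd C(X) \rtimes K$ containing $p_K = [1]_K$, so your argument with $f = 1$ would force $I = C(X) \rtimes K$, a contradiction; consistently with this, your proof invokes the freeness hypothesis only in the dispensable slice construction and would therefore ``prove'' the intersection property from topological freeness alone, which that example refutes. What is missing is the content of Proposition \ref{ideal}: using freeness of $K' \curvearrowright X$ (via Kawamura--Tomiyama and Sierakowski applied to the finite quotients $K'/L$), every closed ideal of $C(X) \rtimes_r K'$ equals $C_0(U) \rtimes_r K'$ for a $K'$-invariant open $U$; applied to $I \cap (C(X) \rtimes_r K')$ this yields $f' p_L \in I$ for all $f' \in C_0(U)$ and all compact open $L < K'$, after which spreading out over $G$ is indeed routine.
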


The following proposition asserts that closed ideals in the crossed product of totally disconnected compact groups can be described in terms of invariant open subsets if the action is free.

\begin{proposition}\label{ideal}
Let $K$ be a totally disconnected compact group and $X$ be a compact Hausdorff space on which $K$ acts freely.
Then every closed ideal of $C(X) \rtimes _r K$ is of the form $C_0(U) \rtimes _r K$ where $U$ is a $K$-invariant open subset of $X$.
\end{proposition}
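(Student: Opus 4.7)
The plan is to analyze $I$ through its intersection with $C(X)$. Set $J := I \cap C(X)$; since $I$ is stable under conjugation by the multiplier unitaries $u_k$ (which lie in $M(C(X) \rtimes_r K)$ because $K$ is compact), $J$ is a $K$-invariant closed ideal of $C(X)$, and hence $J = C_0(U)$ for a unique $K$-invariant open $U \subset X$. The goal is then to prove $I = C_0(U) \rtimes_r K$.

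The containment $C_0(U) \rtimes_r K \subset I$ is essentially formal: since $C_0(U) = J \subset I$ and $I$ is preserved by both left and right multiplication by the $u_k$, the Bochner integrals $\int_K h(k) f u_k \, dk$ for $h \in C_c(K)$ and $f \in C_0(U)$ lie in $I$, and such integrals span a norm-dense subspace of $C_0(U) \rtimes_r K$.

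For the reverse containment $I \subset C_0(U) \rtimes_r K$, I would invoke Green's symmetric imprimitivity theorem: since the compact group $K$ acts freely on $X$, the space $C(X)$ carries the structure of a $(C(X) \rtimes_r K)$--$C(X/K)$ imprimitivity bimodule (with $C(X/K) = C(X)^K$ acting on the right), so $C(X) \rtimes_r K$ is strongly Morita equivalent to $C(X/K)$. Under the Rieffel correspondence, closed two-sided ideals of $C(X) \rtimes_r K$ correspond bijectively to closed ideals of $C(X/K)$; the latter are of the form $C_0(V)$ for $V \subset X/K$ open, and one verifies that $C_0(V)$ matches $C_0(\pi^{-1}(V)) \rtimes_r K$ where $\pi \colon X \to X/K$ is the quotient. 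Hence $I = C_0(\pi^{-1}(V)) \rtimes_r K$ for some open $V \subset X/K$, and intersecting with $C(X)$ forces $\pi^{-1}(V) = U$.

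The main technical obstacle is verifying the explicit form of the Rieffel correspondence $C_0(V) \leftrightarrow C_0(\pi^{-1}(V)) \rtimes_r K$. A more hands-on alternative, one that explicitly uses the totally disconnected hypothesis, writes $K = \varprojlim K/L$ over compact open normal subgroups (which exist by van Dantzig), reduces to the case of free actions of the finite quotients $K/L$ on $X/L$ --- where $C(X/L) \rtimes_r K/L$ is a locally trivial bundle of matrix algebras over $C(X/K)$ whose ideals manifestly have the asserted form --- and then carefully passes to the limit.
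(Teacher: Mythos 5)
Your main route is genuinely different from the paper's; the argument the paper actually gives is essentially the one you relegate to your final ``hands-on alternative.'' The paper never invokes Morita equivalence: it uses van Dantzig to produce compact open \emph{normal} subgroups $L \lhd K$, identifies the corner $p_L (C(X) \rtimes_r K) p_L$ with $C(L \backslash X) \rtimes_r K/L$, applies the known finite-group results of Kawamura--Tomiyama and Sierakowski to get $p_L I p_L = C_0(U_L) \rtimes_r K/L$, and then checks that the preimages $\tilde{U_L} \subset X$ agree for all $L$ by comparing nested corners, so that $I = C_0(\tilde{U}) \rtimes_r K$ because the $p_L$ form an approximate unit. Your Green--Rieffel route buys generality --- it works for any compact (indeed any freely and properly acting) group, with total disconnectedness irrelevant --- at the price of the identification of the Rieffel correspondence with $V \mapsto C_0(\pi^{-1}(V)) \rtimes_r K$, which you rightly flag as the remaining verification; that is a standard but not free computation with the imprimitivity bimodule, and it is the whole content of the reverse inclusion.

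One genuine flaw in your first paragraph: for an infinite compact group $K$ the crossed product $C(X) \rtimes_r K$ is not unital, and $C(X)$ embeds only into the multiplier algebra $M(C(X) \rtimes_r K)$, not into the crossed product itself (already $\mathrm{C}^*(K) \cap \mathbb{C}1 = 0$ inside $M(\mathrm{C}^*(K))$ when $K$ is infinite). Hence the literal intersection $J = I \cap C(X)$ is typically $\{0\}$ and does not produce the ideal you want; this is a discrete-group reflex that does not transfer, and likewise the averaging argument for $C_0(U) \rtimes_r K \subset I$ starts from the wrong $J$. Fortunately that paragraph is dispensable: the Rieffel correspondence alone already yields that every closed ideal has the form $C_0(\pi^{-1}(V)) \rtimes_r K$ for some open $V \subset X/K$, which is exactly the assertion of the proposition. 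If you want to recover $U$ intrinsically from $I$, use the conditional expectations onto the corners $p_L (C(X) \rtimes_r K) p_L$ (as the paper does) rather than an intersection with $C(X)$.
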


\begin{proof}
Let $I$ be a closed ideal of $C(X) \rtimes _r K$.
For each compact open normal subgroup $L \lhd K$, $p_L (C(X) \rtimes _r K) p_L \cong C(L \backslash X) \rtimes _r K/L$.
Since $K/L$ is finite and the action $K/L \curvearrowright L \backslash X$ is free, by theorems by Kawamura--Tomiyama (\cite[Theorem 4.1.]{KawamuraTomiyama}) and Sierakowski (\cite[Theorem 1.13.]{Sierakowski}), $p_L I p_L$ is of the form $C_0 (U_L) \rtimes _r K/L$ where $U_L \subset L \backslash X$ is a $K/L$-invariant open subset.
Let $\tilde{U_L}$ be an inverse image of $U_L$ under the canonical quotient map $X \to L \backslash X$, which is $K$-invariant open subset.
Then $\tilde{U_L}$ does not depend on compact open normal subgroups $L$. 
Indeed, for compact open subgroups $L_1, L_2 \lhd K$ with $L_1 < L_2$, one has $L_2 \backslash \tilde{U_{L_1}} = U_{L_2}$ since $C_0(U_{L_2}) \rtimes _r K / L_2 = p_{L_2} I p_{L_2} = p_{L_2} ( p_{L_1} I p_{L_1} ) p_{L_2} = C_0 (L_2 \backslash \tilde{U_{L_1}}) \rtimes _r K / L_2$.
Hence, one has $\tilde{U_{L_1}} = \tilde{U_{L_2}}$.
For general compact open normal subgroups $L_1, L_2 \lhd K$, $\tilde{U_{L_1}} = \tilde{U_{L_1 \cap L_2}} = \tilde{U_{L_2}}$.
Therefore, we have $I = C_0 (\tilde{U}) \rtimes _r K$ where $\tilde{U} = \tilde{U_L}$ for some compact open normal subgroup $L \lhd K$.
\end{proof}

The proof of the next lemma is inspired by \cite{ExelLacaQuigg}.

\begin{lemma}\label{intersection}
Let $G$ be a totally disconnected locally compact group and $X$ be a compact $G$-space.
Assume that the action $G \curvearrowright X$ is topologically free.
If $I$ is a non-zero ideal of $C(X) \rtimes _r G$, then $I \cap C(X) \rtimes _r K \neq 0$ for every compact open subgroup $K<G$.
\end{lemma}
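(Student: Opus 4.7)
I would prove this by a Rordam-style excision argument, using topological freeness and the tube lemma to construct a positive $K'$-invariant function $h$ whose cutdown $hbh$ becomes approximately supported on a well-chosen compact open subgroup $K' \subseteq K$. By van Dantzig's theorem, the compact open subgroups of $G$ contained in $K$ are cofinal in the approximate unit $(p_L)_L$ of $C(X) \rtimes _r G$, so there is a compact open $K' \subseteq K$ with $p_{K'} I p_{K'} \neq 0$; pick a positive non-zero $b \in p_{K'} I p_{K'} \subseteq B := p_{K'}(C(X) \rtimes _r G) p_{K'}$. Faithfulness of the conditional expectation $E_{K'} : B \to [C(K' \backslash X)]_{K'}$ makes $E_{K'}(b)$ a non-zero positive element, corresponding under $[C(K' \backslash X)]_{K'} \cong C(K' \backslash X)$ to a non-zero positive $K'$-invariant continuous function $e_b$ on $X$; topological freeness supplies a point $x_0$ with $G_{x_0} = \{e\}$ and $\delta := e_b(x_0) > 0$.

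Given $\epsilon > 0$ (to be fixed below), approximate $b$ within $\epsilon$ by a positive $b' = p_{K'} f p_{K'}$ with $f \in C_c(G, C(X))$. Then $b'$ as a function $G \to C(X)$ is supported in the compact $K'$-bi-invariant set $K' \operatorname{supp}(f) K'$, so $C_0 := \operatorname{supp}(b') \setminus K'$ is a compact subset of $G \setminus K'$. Since $e \notin C_0$ and $G_{x_0} = \{e\}$, the closed set $\{(g,x,y) \in G \times X \times X : g.x = y\}$ avoids the compact slice $C_0 \times \{x_0\} \times \{x_0\}$, so the tube lemma produces an open $V \ni x_0$ with compact closure satisfying $g \cdot \overline{V} \cap \overline{V} = \emptyset$ for every $g \in C_0$; shrinking further, I may also arrange $\overline{V} \subseteq \{x \in X : e_b(x) > \delta/2\}$. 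Averaging a bump $h_0 \in C(X)_+$ with $h_0(x_0) = 1$ and $\operatorname{supp}(h_0) \subseteq V$ produces a $K'$-invariant $h := \mu(K')^{-1} \int_{K'} \alpha_k(h_0) \, dk$ with $h(x_0) > 0$ and $\operatorname{supp}(h) \cap g \cdot \operatorname{supp}(h) = \emptyset$ for all $g \in C_0$.

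Now $hb'h$, viewed in $C_c(G, C(X))$ as the function $g \mapsto h \cdot b'(g) \cdot \alpha_g(h)$, vanishes off $K'$: outside $\operatorname{supp}(b')$ trivially, and on $C_0$ by the disjointness of $\operatorname{supp}(h)$ and $g \cdot \operatorname{supp}(h)$. Hence $hb'h \in C_c(K', C(X)) \subseteq C(X) \rtimes _r K' \subseteq C(X) \rtimes _r K$. The same disjointness shows $hb'h = h E_{K'}(b') h$, a product in $A := [C(K' \backslash X)]_{K'}$ that corresponds under $A \cong C(K' \backslash X)$ to the pointwise product $h^2 e_{b'}$; thus $\|hb'h\| \geq h(x_0)^2 e_{b'}(x_0) \geq h(x_0)^2 \delta/4$ once $\epsilon < \delta/2$. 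Shrinking $\epsilon$ further so that $\epsilon < h(x_0)^2 \delta/4$, Rordam's excision lemma applied to the positive elements $hbh \in I$ and $hb'h$ (satisfying $\|hbh - hb'h\| \leq \|b - b'\| < \epsilon < \|hb'h\|$) produces $d \in C(X) \rtimes _r G$ with $d^* (hbh) d = (hb'h - \epsilon)_+$; the right-hand side is a non-zero element of $C^*(hb'h) \subseteq C(X) \rtimes _r K$, while the left-hand side belongs to $I$, yielding the desired non-zero element of $I \cap (C(X) \rtimes _r K)$.

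The main technical obstacle is the circular dependence between the size of $V$ (determined by $\operatorname{supp}(b')$, hence by $\epsilon$) and the smallness of $\epsilon$ required at the excision step (which depends on $h(x_0)$); compatibly arranging the choice of approximation, tube, and threshold so that the excision delivers a non-zero element is the crux of the argument.
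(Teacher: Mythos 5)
Your overall strategy is viable and its core construction is the same as the paper's: both proofs use topological freeness to locate a point $x_0$ with trivial stabilizer at which the conditional expectation of the given element is large, and both cut down by a $K'$-invariant bump function $h$ supported near $x_0$, chosen by a compactness/separation argument over the compact, $K'$-bi-invariant set $\supp{(b')}\setminus K'$ so that $h\cdot b'(g)\cdot\alpha_g(h)=0$ for $g\notin K'$, whence $hb'h=hE_{K'}(b')h\in C(X)\rtimes_r K'$. Where you diverge is the endgame: the paper argues by contradiction, assuming $I\cap C(X)\rtimes_r K=0$ so that the quotient map $q$ by $I$ is isometric on $C(X)\rtimes_r K$, and then derives a $\tfrac13$--$\tfrac23$ contradiction from $q([h]_L a[h]_L)=0$; you instead invoke R{\o}rdam's lemma to excise an explicit non-zero element $(hb'h-\epsilon)_+=d^*(hbh)d$ of $I\cap C(X)\rtimes_r K'$. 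Both endgames work, and yours has the mild advantage of exhibiting an element of the intersection directly.

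However, the circular dependence you flag at the end is a genuine gap in the argument as written, and it is created entirely by your decision to produce $h$ by averaging $h_0$ over $K'$: averaging only yields $h(x_0)>0$, and the value $h(x_0)$ then depends on $V$, hence on $\supp{(b')}$, hence on $\epsilon$ --- while the excision step requires $\epsilon<h(x_0)^2\delta/4$, so $\epsilon$ cannot be fixed first. The loop is broken by normalizing $h(x_0)=1$, which is exactly what the paper does: replace $V$ by the $K'$-invariant open set $K'V$ (the separation $g\,K'\overline{V}\cap K'\overline{V}=\emptyset$ for $g\in C_0$ persists because $C_0$ is $K'$-bi-invariant, so $k'^{-1}gk\in C_0$ for $k,k'\in K'$), and then take $h$ to be a bump function on the compact Hausdorff quotient $K'\backslash X$ equal to $1$ at the image of $x_0$ and supported in the image of $K'V$, pulled back to a $K'$-invariant element of $C(X)$ with $0\le h\le1$. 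With $h(x_0)=1$ the only constraint is, say, $\epsilon<\delta/2$, which can be fixed before $b'$, $V$, and $h$ are chosen; the rest of your argument then goes through.
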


\begin{proof}
First, we will show that for every compact open subgroup $K < G$, $a \in p_K (C(X) \rtimes _r G) p_K$ and $\varepsilon > 0$, there exists $h \in C(X)^K$ such that
\begin{enumerate}
\item $0 \leq h \leq 1$,
\item $\| [h]_K E_K(a) [h]_K \| > \| E_K (a) \| - \varepsilon$,
\item $\| [h]_K E_K(a) [h]_K - [h]_K a [h]_K \| < \varepsilon$.
\end{enumerate}
We may assume that $a \in p_K (C_c(G , C(X))) p_K$.
Since the action is topologically free, there exists $x \in X$ such that $| a_e (x) | > \| a_e \| - \varepsilon$ and $G_x = \{ e \}$ where $a_e \in C(X)^K$ is a function such that $E_K(a) = [a_e]_K$.
Take a finite subset $F \subset G$ such that $a = \sum _{g \in F} a|_{KgK}$ and $F \ni g \mapsto KgK \in K \backslash G / K$ is injective.
For each $g \in F \setminus \{ e \}$, there is a $K$-invariant open neighborhood $U_g \subset X$ of $x$ such that $U_g \cap gU_g = \emptyset$.
Indeed, for a neighborhood system $\{ V_{\lambda} \}$ of $x \in X$, let $U_{\lambda} := KV_{\lambda}$.
Suppose that $U_{\lambda} \cap g U_{\lambda} \neq \emptyset$ for all $\lambda$, then there exist $k_{\lambda}, k'_{\lambda} \in K$ and $v_{\lambda}, v'_{\lambda} \in V_{\lambda}$ such that $k_{\lambda} v_{\lambda} = g k'_{\lambda} v'_{\lambda}$.
We may assume that $k_{\lambda}, k'_{\lambda}$ converges to some $k, k' \in K$ respectively, and then one has $kx = gk'x$.
This is a contradiction since $g \not\in K$ and $G_x = \{ e \}$.
Let $h_g \in C(X)^K = C(K \backslash X)$ be a function such that $0 \leq h_g \leq 1$, $h_g (x) = 1$, and $\supp h_g \subset U_g$ and let $h := \prod _{g \in F \setminus \{ e \}} h_g$.
Then $0 \leq h \leq 1$, $h(x)=1$, and 
\begin{align*}
\| [h]_K E_K(a) [h]_K \| = \| h a_e h \| \geq |a_e (x)| > \| E_K(a) \| - \varepsilon .
\end{align*}
For (3), by $a(ks) = p_Ka (ks) = k.(a (s))$ for $k \in K$ and $s \in G$, one has
\begin{align*}
\left[ [h]_K a |_{KgK} [h]_K \right] (kgl) &= k.(h \cdot g.h) \left( \int _{kgK} a|_{KgK} (s) \, ds \right) \\
&= 0
\end{align*}
for $g \in F \setminus \{ e \}$ and $k, l \in K$ and $\left[ [h]_K a |_{KgK} [h]_K \right] (s) = 0$ for $s \not\in KgK$.
Therefore, we have $[h]_K a [h]_K = [h]_K [a_e]_K [h]_K = [h]_K E_K(a) [h]_K$ and this $h$ has the desired property.

Suppose that there exists a non-zero ideal $I$ in $C(X) \rtimes _r G$ such that $I \cap C(X) \rtimes _r K = 0$ for some compact open subgroup $K<G$.
Take a non-zero positive element $a \in I$ and a compact open subgroup $L < G$ such that $p_L a p_L \neq 0$ and $L < K$.
Note that $E_L( p_L a p_L )$ is non-zero since $E_L$ is faithful.
Let $q \colon C(X) \rtimes _r G \to C(X) \rtimes _r G / I$ be the canonical quotient map.
Note that $q | _{C(X) \rtimes _r K}$ is isometry since $I \cap C(X) \rtimes _r K = 0$.
By the first paragraph of this proof, there exists $h \in C(X)^L$ such that 
\begin{enumerate}
\item $0 \leq h \leq 1$, 
\item $\displaystyle \| [h]_L E_L( p_L a p_L ) [h]_L \| > \frac{2}{3} \| E_L ( p_L a p_L ) \|$,
\item $\displaystyle \| [h]_L E_L( p_L a p_L ) [h]_L - [h]_L a [h]_L \| < \frac{1}{3} \| E_L (p_L a p_L) \|$.
\end{enumerate}
One has 
\begin{align*}
\frac{1}{3} \| E_L (p_L a p_L) \| &> \| q ([h]_L E_L( p_L a p_L ) [h]_L - [h]_L a [h]_L) \| \\
&= \| q ([h]_L E_L( p_L a p_L ) [h]_L) \| \\
&= \| [h]_L E_L( p_L a p_L ) [h]_L \| \\
&> \frac{2}{3} \| E_L ( p_L a p_L ) \|
\end{align*}
and this is a contradiction.
\end{proof}

\begin{proof}[Proof of Theorem \ref{main}]
Let $I$ be a non-zero ideal in $C(X) \rtimes _r G$.
By Proposition \ref{ideal} and Proposition \ref{intersection}, $I \cap C(X) \rtimes _r K = C_0 (U) \rtimes _r K$ for 
some non-empty $K$-invariant open subset $U \subset X$. 
Let $\displaystyle V = \bigcup_{g \in G} gU$.
Then $V$ is a non-empty $G$-invariant open subset and $C_0 (V) = \overline{\Span} \{ g.f \mid g \in G, f \in C_0 (U) \}$ by Stone--Weierstrass theorem.
We will prove that $C_0 (V) \rtimes _r G \subset I$.
It suffices to prove that $f p_L \in I$ for all $f \in C_0(V)$ and compact open subgroup $L < G$ since $C_0 (V) \cdot \mathrm{C}^*_{\lambda} (G) \subset C_0(V) \rtimes _r G$ is dense and $f p_L x \to fx$ as $L \to \{ e \}$ for $f \in C_0(V)$ and $x \in \mathrm{C}^*_{\lambda}(G)$.
We may assume that $f \in C_0(V)$ is of the form $f = \sum_{i=1}^n g_i . f_i$ where $g_i \in G$ and $f_i \in C_0(U)$.
There exists a compact open subgroup $L_0 < K$ such that $g_i^{-1} L_0 g_i < K$ for all $i=1, \ldots , n$.
For a compact open subgroup $L<L_0$, $f p_L = \sum _{i=1}^n u_{g_i} f_i p_{g_i^{-1} L g_i} u_{g_i}^* \in I$ since $f_i p_{g_i^{-1} L g_i} \in C_0(U) \rtimes _r K \subset I$.
Thus, $f p_L \in I$ for all compact open subgroup $L < G$ since $f p_L = f p_M p_L \in I$ for $M < L \cap L_0$.
Therefore, we have $C_0 (V) \rtimes _r G \subset I$.
\end{proof}

\begin{corollary}\label{maincor}
Let $G$ be a totally disconnected locally compact group and $X$ be a compact $G$-space.
Assume that the action $G \curvearrowright X$ is minimal, topologically free, and there exists a compact open subgroup $K$ of $G$ such that the restricted action $K \curvearrowright X$ is free.
Then the reduced crossed product $C(X) \rtimes _r G$ is simple.

In particular, if a totally disconnected locally compact group $G$ admits a topologically free boundary, then the reduced crossed product $C(\partial _F G) \rtimes _r G$ is simple.
\end{corollary}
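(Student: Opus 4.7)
The plan is to combine Theorem~\ref{main} with minimality to obtain the general statement, and then to verify the hypotheses of that general statement for $X = \partial _F G$ by invoking the Le Boudec--Tsankov theorem cited in the introduction.

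For the general statement, I would start with an arbitrary non-zero closed ideal $I \subset C(X) \rtimes _r G$. Theorem~\ref{main} supplies a non-zero $G$-invariant closed ideal $J \subset C(X)$ with $J \rtimes _r G \subset I$. Every closed ideal of $C(X)$ has the form $C_0(U)$ for an open $U \subset X$, and $G$-invariance of $J$ is equivalent to $G$-invariance of $U$; non-vanishing of $J$ forces $U \neq \emptyset$. By minimality of $G \curvearrowright X$, the only non-empty $G$-invariant open subset of $X$ is $X$ itself, so $U = X$ and hence $1 \in J \subset I$. Therefore $I = C(X) \rtimes _r G$, proving simplicity.

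For the ``in particular'' statement, I would apply the general statement with $X = \partial _F G$ and verify the three hypotheses. Minimality is immediate since $\partial _F G$ is a $G$-boundary. The standing assumption that $G$ admits a topologically free boundary, combined with the Le Boudec--Tsankov theorem cited in the introduction, upgrades to the conclusion that $G$ acts freely on $\partial _F G$; in particular this action is topologically free. Finally, freeness of $G \curvearrowright \partial _F G$ says $G_x = \{ e \}$ for every $x$, so the restriction of this action to \emph{any} compact open subgroup $K < G$ is also free; such a subgroup $K$ exists by van Dantzig's theorem. Once Theorem~\ref{main} is in hand, this second step is essentially formal, and the only non-algebraic input is the Le Boudec--Tsankov upgrade from topological freeness of some boundary to freeness of the Furstenberg boundary.
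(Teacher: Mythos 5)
Your argument is correct and matches the paper's proof: both deduce the general statement from Theorem \ref{main} by noting that minimality forces the non-empty $G$-invariant open set $U$ to be all of $X$, and both obtain the Furstenberg-boundary case from the Le Boudec--Tsankov upgrade of ``admits a topologically free boundary'' to freeness of $G \curvearrowright \partial_F G$, with van Dantzig supplying the compact open subgroup. (One cosmetic point: since $C(X) \rtimes_r G$ need not be unital for non-discrete $G$ and $C(X)$ sits only in its multiplier algebra, the clause ``$1 \in J \subset I$'' should be dropped in favour of the conclusion you already have, namely $C(X) \rtimes_r G = J \rtimes_r G \subset I$.)
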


\begin{proof}
By the result of Le Boudec and Tsankov \cite[Corollary 1.3.]{LeBoudecTsankov}, if a locally compact group admits a topologically free boundary, then its action on its Furstenberg boundary is free.
Hence, the corollary follows from Theorem \ref{main}.
\end{proof}

In Theorem \ref{main}, the assumption that there exists a compact open subgroup of $G$ to which the restricted action is free is necessary; see the example below.
The author is grateful to Narutaka Ozawa for sharing this example.

\begin{example}\label{exampletopfree}
There is a topologically free action of a totally disconnected compact abelian group on a compact Hausdorff space that does not have the intersection property.

Let $\Gamma \curvearrowright A$ be a free action of a finite abelian group on an infinite set; for instance, $\mathbb{Z} / 2 \mathbb{Z} \curvearrowright (\mathbb{Z} / 2 \mathbb{Z}) ^{\mathbb{N}}$ by multiplication. 
Let $T$ be a rooted tree such that each vertex has $A$ descendants and $X$ be a compactification $\overline{T}$ of $T$,i.e., the disjoint union of $T$ and the set of ends $\partial T$ of $T$, thus, as a set, $\displaystyle X := \bigcup _{d \in \{ 0 \} \cup \mathbb{N} \cup \{ \infty \}} A ^d$.
This set $X$ is topologized by declaring that sets of the form 
\[
\{ x \} \cup \{ y \in \overline{T} \mid [x , y] \cap F = \emptyset \}
\]
where $x \in \overline{T}$, $F \subset T$ is a finite subset, and $[x, y]$ is a unique geodesic connecting $x$ with $y$, are open (see \cite[Section 5.2.]{BrownOzawa}).
Viewing elements in $X$ as finite or infinite sequences in $A$, $\ell (x) \in \{ 0 \} \cup \mathbb{N} \cup \{ \infty \}$ denotes the word length of $x \in X$.
Then one has $x_n \to x$ in $X$ if and only if 
\begin{itemize}
\item for each $k \leq \ell (x)$, there is $N \in \mathbb{N}$ such that for all $n \geq N$ one has $\ell (x_n) \geq k$ and $x_n (k) = x (k)$,
\item for $k = \ell (x) +1 \in \mathbb{N}$, and for any finite set $E \subset A$, there is $N \in \mathbb{N}$ such that for all $n \geq N$ one has either $\ell (x_n) < k$ or $x_n (k) \not\in E$.
\end{itemize}
The second condition is vacuous if $\ell (x) = \infty$.

The compact abelian group $K := \Gamma ^{\mathbb{N}}$ acts on $X$ by the pointwise multiplication.
This action is topologically free since $K_x = \{ e \}$ for every $x \in X$ with $\ell (x) = \infty$ and $\{ x \in X \mid \ell (x) = \infty \}$ is dense in $X$.
For $d \in \{ 0 \} \cup \mathbb{N}$, we write $X_d := \{ x \in X \mid \ell (x) \leq d \} \subset X$ and $K = K_d \times K_d'$ where $K_d = \Gamma ^{\{ 1, 2, \ldots , d \}}$ and $K_d' = \Gamma ^{\{ d+1, d+2, \ldots  \}}$.
Note that $X_d$ is closed and $K$-invariant, and $K_d'$ acts on $X_d$ trivially.
Hence, one obtains the corresponding $*$-homomorphism
\[
\pi _d \colon C(X) \rtimes K \to C(X_d) \rtimes K \cong (C(X_d) \rtimes K_d) \otimes \mathrm{C}^* (K_d').
\]
Then $\pi _d (p_K) = p_{K_d} \otimes p_{K_d'}$.
Note that $p_{K_d'}$ is the minimal projection in $\mathrm{C}^* (K_d') \cong c_0(\hat{K_d'})$ that corresponds to the unit character.
We take a non-trivial character $\chi _d$ on $K_d'$.
Then one has $\chi _d (p_{K_d'}) = 0$ since a character $\chi$ on $\mathrm{C}^*(K_d')$ with $\chi (p_{K_d'}) = 1$ should be equal to the unit character.
We take a compact open normal subgroup $L_d \lhd K_d'$ such that $\chi _d (p_{L_d}) = 1$.
Such a compact open normal subgroup exists since $(p_L)_{L \lhd K_d' : \text{compact open normal subgroup}}$ is an approximate unit for $\mathrm{C}^*(K_d')$.
Let 
\[
I := \bigcap _d \ker{((\mathrm{id} \otimes \chi _d) \circ \pi _d )} \lhd C(X) \rtimes K.
\]
This ideal is non-zero since $p_K \in I$.
We claim that there is no non-zero $K$-invariant ideal $J \lhd C(X)$ such that $J \rtimes K \subset I$.
Suppose that such an ideal $J$ exists.
Take $h \in J$ and $x \in X$ such that $h(x) \neq 0$.
We may assume that $d := \ell (x) < \infty$ since $\{ x \in X \mid \ell (x) < \infty \}$ is dense in $X$.
Then
\[
((\mathrm{id} \otimes \chi _d) \circ \pi _d )(hp_{L_d}) = h |_{X_d} \in C(X_d) \subset C(X_d) \rtimes K_d
\]
is non-zero.
This is a contradiction.
\end{example}

We apply Corollary \ref{maincor} to two examples.

\begin{corollary}
Let $\mathcal{N}_{d,k}$ be the Neretin group.
Then $C( \partial _F \mathcal{N}_{d,k} ) \rtimes _r \mathcal{N}_{d,k}$ is simple.
\end{corollary}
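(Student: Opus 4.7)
The plan is a short invocation of the tools already set up. First, I would recall that the Neretin group $\mathcal{N}_{d,k}$ is a totally disconnected locally compact group (this is noted in Section \ref{Neretin}, since $\Aut{T_{d,k}}$ embeds as a compact open subgroup). Second, I would cite the result of Caprace, Le Boudec and Matte Bon \cite{CapraceLeBoudecMatteBon} that $\mathcal{N}_{d,k}$ admits a topologically free boundary, namely the natural action of $\mathcal{N}_{d,k}$ on the boundary $\partial T_{d,k}$ of the tree $T_{d,k}$; this is the content of Section 4 of loc.\ cit., where one verifies that the stabilizer of a generic end is trivial.

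With these two facts in hand, the conclusion is immediate from Corollary \ref{maincor}: applied to $G = \mathcal{N}_{d,k}$ with its Furstenberg boundary $\partial_F \mathcal{N}_{d,k}$, it yields simplicity of $C(\partial_F \mathcal{N}_{d,k}) \rtimes_r \mathcal{N}_{d,k}$. No further computation is needed, since the work of Le Boudec and Tsankov (invoked inside the proof of Corollary \ref{maincor}) upgrades the existence of \emph{some} topologically free boundary to the \emph{freeness} of the action on $\partial_F G$, which in turn supplies the compact open subgroup acting freely that is needed to apply Theorem \ref{main}.

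There is essentially no obstacle at the level of this corollary; the entire difficulty has been absorbed into Corollary \ref{maincor} and the cited boundary-theoretic results. The only thing to be careful about is to phrase the citation correctly: we do not directly use that $\partial T_{d,k}$ is the Furstenberg boundary (it need not be), but only that it is \emph{a} topologically free $\mathcal{N}_{d,k}$-boundary, which is precisely the hypothesis of Corollary \ref{maincor}.
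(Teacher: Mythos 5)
Your overall strategy is exactly the paper's: quote Caprace--Le Boudec--Matte Bon for the existence of a topologically free boundary and then feed this into Corollary \ref{maincor}. However, there is one concrete error in the way you instantiate the citation. You assert that the topologically free boundary is the action of $\mathcal{N}_{d,k}$ on $\partial T_{d,k}$ itself, ``where one verifies that the stabilizer of a generic end is trivial.'' This is false: for \emph{every} end $\xi \in \partial T_{d,k}$ the stabilizer $(\mathcal{N}_{d,k})_\xi$ is enormous (any almost automorphism acting as the identity on a neighbourhood of $\xi$ fixes $\xi$), so the action $\mathcal{N}_{d,k} \curvearrowright \partial T_{d,k}$ is nowhere free, let alone topologically free. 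The verification you describe would fail at the first step.

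The correct use of \cite{CapraceLeBoudecMatteBon}, and the one the paper makes, is their Theorem 1.3: a totally disconnected locally compact group admitting a \emph{piecewise minimal-strongly-proximal} action on a totally disconnected compact space admits \emph{some} topologically free boundary. The action $\mathcal{N}_{d,k} \curvearrowright \partial T_{d,k}$ is only used as a witness of piecewise minimal-strong-proximality; the topologically free boundary produced by their theorem is a different (larger) boundary, not $\partial T_{d,k}$. Once the citation is repaired in this way, the rest of your argument --- including the correct remark that one only needs \emph{a} topologically free boundary, since Le Boudec--Tsankov then upgrades this to freeness of $\mathcal{N}_{d,k} \curvearrowright \partial_F \mathcal{N}_{d,k}$ inside Corollary \ref{maincor} --- goes through verbatim and coincides with the paper's proof.
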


\begin{proof}
In \cite[Theorem 1.3.]{CapraceLeBoudecMatteBon}, Caprace, Le Boudec, and Matte Bon proved that any totally disconnected locally compact group with a piecewise minimal-strongly-proximal action on a totally disconnected compact space admits a topologically free boundary.
Since the action $\mathcal{N}_{d,k} \curvearrowright \partial T_{d,k}$ is piecewise minimal-strongly-proximal, the Neretin group admits a topologically free boundary.
Therefore, by Corollary \ref{maincor}, $C( \partial _F \mathcal{N}_{d,k} ) \rtimes _r \mathcal{N}_{d,k}$ is simple.
\end{proof}

\begin{proposition}
Let $G$ be an HNN extension corresponding to a totally disconnected locally compact group $H$, its open subgroup $H_0$, and an open, injective, continuous homomorphism $\theta \colon H_0 \hookrightarrow H$, and $T$ be its Bass--Serre tree.
Assume that $1 < [H : H_0] < \infty $, $1 < [H : \theta (H_0)] < \infty$, there exist $a \in H \setminus H_0$ and $b \in H \setminus \theta (H_0)$ such that $a$ centralizes $H_0$ and $b$ centralizes $\theta (H_0)$, and $\bigcap _{n \geq 0} \theta ^{-n} (H_0^{(n)})$ is trivial where $H_0^{(0)} = H_0$ and $H_0^{(n+1)} = H_0 \cap \theta (H_0^{(n)})$.
Then the action of $G$ on the boundary of the Bass--Serre tree $\partial T$ is topologically free.

In particular, the action $\mathrm{G} (m,n) \curvearrowright \partial T_{|m|+n}$ is topologically free.
\end{proposition}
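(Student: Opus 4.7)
The strategy is to exhibit a single end $\xi \in \partial T$ with trivial $G$-stabilizer. Because the Bass--Serre action of $G$ on $T$ is of general type, the induced action on $\partial T$ is minimal, and since $G_{g\xi} = gG_\xi g^{-1}$, the set $\{\xi : G_\xi = \{e\}\}$ is $G$-invariant, hence dense as soon as it is non-empty. So one such $\xi$ suffices.

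Ends of $T$ are parametrized by infinite reduced normal-form words $w = s_1 t^{\epsilon_1} s_2 t^{\epsilon_2} \cdots$ with letters in fixed coset representatives of $H/H_0$ and $H/\theta(H_0)$; the associated ray is $v_n = w_n H$ with $w_n$ the length-$n$ prefix. The stabilizer fits in a sequence $1 \to K \to G_\xi \to \mathbb{Z}$, where $K$ is the pointwise fixer of a cofinal subray (elliptic) and the image records the translation length of a hyperbolic fixer (hyperbolic); both parts must vanish. For the elliptic part, begin with the straight ray $v_n = t^{-n}H$. Iterating $tht^{-1}=\theta(h)$ for $h \in H_0$ gives $t^{-n}Ht^n \cap H = \bigcap_{j=0}^{n-1}\theta^{-j}(H_0)$, whose intersection over $n$ equals $\bigcap_{j \geq 0}\theta^{-j}(H_0) = \bigcap_n \theta^{-n}(H_0^{(n)}) = \{e\}$ by the third hypothesis. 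Any element fixing a cofinal subray is, after conjugation by a power of $t$, in this intersection, so $K = \{e\}$. Since $a$ centralizes $H_0$, the same computation goes through if one replaces $s_i = e$ by $s_i = a^{\eta_i}$ for any sequence $(\eta_i) \in \{0,1\}^{\mathbb{N}}$; analogously for inserting powers of $b$ at $t$-steps, since $b$ centralizes $\theta(H_0)$.

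To kill the hyperbolic part, choose $(\eta_i)$ not eventually periodic and let $\xi$ be the end of the ray with word $a^{\eta_1} t^{-1} a^{\eta_2} t^{-1} \cdots$, which is reduced since $a \notin H_0$ puts $e$ and $a$ in distinct $H_0$-cosets. If a hyperbolic $g$ satisfied $gv_n = v_{n+\ell}$ for $n \geq N$ and some $\ell \geq 1$, writing $g = w_{n+\ell}h_nw_n^{-1}$ with $h_n \in H$ and comparing consecutive indices yields the recursion $h_n = a^{\eta_{n+\ell+1}}t^{-1}h_{n+1}ta^{-\eta_{n+1}}$. The constraint $h_n \in H$ forces $h_{n+1} \in \theta(H_0)$, so writing $h_{n+1} = \theta(k_{n+1})$ with $k_{n+1} \in H_0$ and invoking the $a$-centrality of $k_{n+1}$ reduces the recursion to
\[
\theta(k_n) = k_{n+1}\,a^{\eta_{n+\ell+1}-\eta_{n+1}}.
\]
If $\eta_{n+\ell+1} = \eta_{n+1}$ for all $n \geq N$, then $k_{n+1} = \theta(k_n)$, so $k_n$ lies in $\bigcap_j \theta^{-j}(H_0) = \{e\}$ and $g = w_{n+\ell}w_n^{-1}$ is a genuine hyperbolic fixer --- but this forces $(\eta_i)$ to be eventually $\ell$-periodic, contradicting aperiodicity. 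Otherwise mismatches occur, and iterating the relation $\theta(k_n) = k_{n+1} a^{\pm 1}$ through a long preceding matched block together with the $\theta$-descent afforded by $\bigcap_n \theta^{-n}(H_0^{(n)}) = \{e\}$ squeezes $a^{\pm 1}$ into the trivial intersection, contradicting $a \notin H_0$. Hence $G_\xi = \{e\}$, establishing topological freeness. The second assertion for $\mathrm{G}(m,n) \curvearrowright \partial T_{|m|+n}$ then follows by verifying the three hypotheses for the Schlichting completion of $\mathrm{BS}(m,n)$: $H = \overline{\langle a\rangle}$ is abelian (so both centralizing conditions hold with $a = b$), $|m|, n \geq 2$ yields the non-memberships and finite indices, and the deep intersection is trivial from the pro-adic structure.

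\textbf{Main obstacle.} The heart of the proof is the combinatorial extraction of the contradiction in the mismatched case: the bare relation $\theta(k_n) = k_{n+1}a^{\pm 1}$ does not on its own contradict $k_n, k_{n+1} \in H_0$, and one must iterate it through matched blocks, using the triviality of $\bigcap_n \theta^{-n}(H_0^{(n)})$ to drive $k_n$ into arbitrarily deep iterated $\theta$-preimages, so that the perturbation $a^{\pm 1}$ is forced to lie in a vanishing intersection. The centralizers $a$ (and symmetrically $b$, if one also mixes $t$-steps into the ray) are used in two ways simultaneously: their centralizing property permits aperiodic modifications of the ray without disturbing the elliptic computation, and their non-memberships $a \notin H_0$, $b \notin \theta(H_0)$ supply the final contradiction.
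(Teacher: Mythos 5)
Your elliptic analysis is sound: for the undecorated ray $t^{-n}H$ the pointwise stabilizer of the tail is $\bigcap_{j\geq 0}\theta^{-j}(H_0)=\bigcap_n\theta^{-n}(H_0^{(n)})=\{e\}$, and since $a$ centralizes (hence normalizes) $H_0$, inserting the decorations $a^{\eta_i}$ does not change this computation. The gap is in the hyperbolic case, which you yourself flag as the main obstacle but do not actually close. From $\theta(k_n)=k_{n+1}a^{\delta_n}$ with $\delta_n=\eta_{n+\ell+1}-\eta_{n+1}$, a mismatch $\delta_n=\pm1$ gives $a^{\pm1}=k_{n+1}^{-1}\theta(k_n)$ with $k_{n+1}\in H_0$ and $\theta(k_n)\in\theta(H_0)$; since $\theta(H_0)\not\subset H_0$ in general, this does not contradict $a\notin H_0$, and iterating through a matched block only yields $a^{\pm1}=k_{n+1}^{-1}\theta^{m+1}(k_{n-m})$, which again need not lie in $H_0$. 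Worse, the recursion $k_{n+1}=\theta(k_n)a^{-\delta_n}$, viewed as a congruence tower (e.g.\ for $\mathrm{G}(2,3)$, $m_{n+1}=(3m_n-\delta_n)/2$ with $m_n\equiv\delta_n \pmod 2$), determines a coherent pro-$p$-adic solution for the initial value; in the Schlichting completion $H=\overline{\langle a\rangle}$ such a solution may well exist in $H_0$, in which case the corresponding end of your purely descending ray has a \emph{nontrivial} hyperbolic stabilizer despite $(\eta_i)$ being aperiodic. So the argument is not merely incomplete; the choice of a purely descending ray puts the entire burden on an algebraic squeeze that there is real reason to believe fails.

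The paper avoids this entirely by a different choice of ray: $\xi$ is represented by $\prod_{i=1}^{\infty}(btat^{-1})(t^{-2})^{i}$, which interleaves single \emph{up}-steps (the $t$ in $bt$) with descending blocks $t^{-1}(t^{-2})^i$ of strictly increasing length. Since the $G$-action on the Bass--Serre tree preserves the orientation of edges (source $gH$, range $gt^{-1}H$), any translation of degree $d\geq 1$ along this ray would have to send an up-edge to a down-edge deep inside a descending block, which is impossible; no algebra is needed to kill the hyperbolic part. Only the elliptic case remains, and there the increasing blocks place the fixing element in $\theta^{-M(i)}(H_0^{(M(i)+1)})$ with $M(i)\to\infty$, using that $a$ centralizes $H_0$ and $b$ centralizes $\theta(H_0)$ to push the conjugations through (this is also why the hypothesis involves both $a$ and $b$, whereas your ray never uses $b$). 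To repair your proof you would need either to switch to a ray whose up/down pattern is aperiodic, as the paper does, or to supply a genuine argument ruling out solutions of the recursion $\theta(k_n)=k_{n+1}a^{\delta_n}$ in $H_0$ for a suitably chosen $(\eta_i)$; as written, neither is done.
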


\begin{proof}
Consider an element $\xi$ of $\partial  T$ represented by a ray 
\[
btH, btat^{-1}H, btat^{-2}H, btat^{-3}H, btat^{-3}btH, btat^{-1}H, btat^{-2}H, \ldots
\]
(this element is represented by $\prod _{i=1}^{\infty} (btat^{-1})(t^{-2})^i$).
We will show that $G_{\xi}$ is trivial.
Take $g \in G_{\xi}$ arbitrarily.
Then $g$ fixes some ray representing $\xi$ or acts on some ray representing $\xi$ by translation.
Assume that $g$ acts on some ray representing $\xi$ by translation of degree $d \geq 1$.
Then for sufficiently large $N$, writing $\prod_{i= 1}^N (btat^{-1})(t^{-2})^i$ as $x$, $g$ sends the three vertices $xH, xbtH, xbtat^{-1}H$ to $xbtat^{-(d-1)}H, xbtat^{-d}H, xbtat^{-(d+1)}H$.
However, such an element $g$ does not exist because $G$-action on the tree $T$ preserves directions of edges, which is defined as follows; the source of an edge $gH_0$ is $gH$ and the range of $gH_0$ is $gt^{-1}H$.
Thus, $g$ fixes some ray representing $\xi$.
In this case, for large $N$, writing $\prod_{i= 1}^N (btat^{-1})(t^{-2})^i$ as $x$, 
\[
g \in x \left( H \cap \bigcap _{i \geq N+1} \left( \prod _{j=N+1}^i (btat^{-1}) (t^{-2})^j \right) H \left( \prod _{j=N+1}^i (btat^{-1}) (t^{-2})^j \right) ^{-1} \right) x^{-1}.
\]
It is easy to see that 
\[
H \cap \left( \prod _{j=N+1}^i (btat^{-1}) (t^{-2})^j \right) H \left( \prod _{j=N+1}^i (btat^{-1}) (t^{-2})^j \right) ^{-1} \subset \theta ^{-M(i)} (H_0^{(M(i)+1)})
\]
for $i \geq N+1$ where $M(i)$ is a some positive integer with $M(i) \to \infty$ as $i \to \infty$.
Therefore, $g$ is trivial.
\end{proof}


We now proceed to prove the partial converse to Corollary \ref{maincor}.

\begin{theorem}\label{converse}
Let $G$ be a totally disconnected locally compact group and $X$ be a compact $G$-space.
Assume that the action $G \curvearrowright X$ is amenable and the stabilizer map $X \ni x \mapsto G_x \in \Sub{G}$ is continuous.
If $C(X) \rtimes _r G$ is simple, then the action $G \curvearrowright X$ is free.
\end{theorem}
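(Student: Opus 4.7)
My plan is to argue by contraposition. Suppose the action $G \curvearrowright X$ is not free; fix $x_0 \in X$ with $G_{x_0} \neq \{ e \}$. I will produce a non-zero proper ideal of $C(X) \rtimes _r G$, contradicting simplicity.

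The first step is to extract two consequences of amenability of the action: by the Anantharaman--Delaroche theorem referenced in the preliminaries, $C(X) \rtimes G = C(X) \rtimes _r G$, so every covariant representation of $(C(X), G)$ integrates to a representation of the reduced crossed product; and, as a standard further consequence of the Anantharaman--Delaroche theory of amenable actions, every stabilizer $G_x$ is amenable, so $\mathrm{C}^*(G_x) = \mathrm{C}^*_{\lambda}(G_x)$ and the trivial character $\varepsilon _x$ makes sense as a $*$-homomorphism $\mathrm{C}^*_{\lambda}(G_x) \to \mathbb{C}$.

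The second step is to build, for each $x \in X$, an induced covariant representation $(\pi _x, U_x)$ of $(C(X), G)$ on the Hilbert space $L^2(G/G_x)$, using the continuous family of Haar measures on the stabilizers provided by Lemma \ref{measure} to make sense of $L^2(G/G_x)$ together with its quasi-invariant measure. The group $G$ acts by the quasi-regular representation (left translation), and $C(X)$ acts by $\pi _x (f) \xi (g) = f(gx) \xi (g)$, which is well defined on $G/G_x$ because $s \in G_x$ fixes $x$. Amenability of $G_x$ guarantees that $U_x$ is weakly contained in the regular representation of $G$, and combined with $C(X) \rtimes G = C(X) \rtimes _r G$ the integrated covariant representation gives a well-defined $*$-homomorphism $\pi _x \colon C(X) \rtimes _r G \to B(L^2(G/G_x))$.

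The main obstacle---and the crux of the proof---is to show that $\pi _{x_0}$ has a non-trivial kernel when $G_{x_0} \neq \{ e \}$. The difficulty is that the quasi-regular representation $U_{x_0}$ is itself faithful on $\mathrm{C}^*_{\lambda}(G)$ (by Fell's theorem, using amenability of $G_{x_0}$), so the non-faithfulness of $\pi _{x_0}$ must come from a subtle interplay between the diagonal action of $C(X)$ and the stabilizer. My plan is to use the continuous averaging map $h \mapsto (x \mapsto \int _{G_x} h \, d\mu _x)$ provided by Lemma \ref{measure}, together with the compact-open subgroup structure of $G$, to produce a concrete element of $C(X) \rtimes _r G$ that lies in $\ker \pi _{x_0}$ but remains non-zero in the reduced crossed product. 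Carrying this out is essentially the $\mathrm{C}^*$-algebraic analog, in the totally disconnected locally compact setting, of the groupoid-theoretic principle that an effective Hausdorff groupoid with continuous isotropy bundle is principal; the crucial input is the continuity of the stabilizer map, which upgrades the non-faithfulness of $\varepsilon _{x_0}$ on the single fiber $\mathrm{C}^*_{\lambda}(G_{x_0})$ to a genuine non-faithfulness of $\pi _{x_0}$ on all of $C(X) \rtimes _r G$.
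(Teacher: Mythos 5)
Your overall strategy coincides with the paper's: argue by contraposition, induce a covariant representation of $(C(X),G)$ on $L^2(G/G_{x_0})$ from a point $x_0$ with non-trivial stabilizer, use amenability of the action to integrate it to a $*$-homomorphism $\tilde{\pi}$ on $C(X)\rtimes_r G$, and then show $\tilde{\pi}$ is not injective. The problem is that the proposal stops exactly where the real work begins. The entire difficulty of the theorem is concentrated in exhibiting an explicit non-zero element of $\ker\tilde{\pi}$, and your text only announces a ``plan'' to do this, appealing to an analogy with effective groupoids; no construction is given, and the analogy does not by itself produce one. In the paper this step is Proposition \ref{function}: one chooses distinct $g_1,g_2\in G_{x_0}$ and $f_1,f_2\in C_c(G)$ with disjoint supports and $f_i(g_i)>0$, forms
\[
[\eta_0(s)](y)=f_1(s)\int_{G_y}f_2\,d\mu_y-f_2(s)\int_{G_y}f_1\,d\mu_y,
\]
which integrates to zero over every stabilizer yet does not vanish identically on $G_{x_0}$ at $x_0$; one must then correct by the factor $\tilde{\tau}(h,y)\Delta_G(h)^{-1/2}$ (the continuity of $(h,y)\mapsto\Delta_{G_y}(h)^{-1/2}$ is itself a lemma, proved using the continuity of the stabilizer map and the continuous family of Haar measures from Lemma \ref{measure}), and finally convolve with a suitable $\zeta\in C_c(G)$ to obtain a non-zero $f_0$ satisfying $\int_{G_y}[f_0(sh)](y)\Delta_G(h)^{1/2}\Delta_{G_y}(h)^{-1/2}\,d\mu_y(h)=0$ for all $y$ and $s$. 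The verification that the associated element $f$, $[f(s)](y)=[f_0(s)](s^{-1}.y)$, lies in $\ker\tilde{\pi}$ is then a further computation with the Weil integration formula and the rho-function for $(G,G_{x_0})$. None of this modular-function bookkeeping --- which is precisely where the hypothesis of continuity of the stabilizer map enters --- appears in your sketch, so as written the proof has a genuine gap at its crux.

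Two smaller points. First, the claim that amenability of the action forces each stabilizer $G_x$ to be amenable is true but not needed: the paper only uses that $C(X)\rtimes G=C(X)\rtimes_r G$, so that the covariant pair integrates through the reduced crossed product; no trivial character on $\mathrm{C}^*_{\lambda}(G_{x_0})$ is invoked. Second, once a non-zero element of $C_c(G,C(X))$ is produced, its non-vanishing in $C(X)\rtimes_r G$ is automatic (the regular representation is faithful on $C_c(G,C(X))$), so the phrase ``remains non-zero in the reduced crossed product'' does not signal an additional difficulty; the only difficulty is the one you deferred.
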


In the following, we fix a family of measures $\{ \mu _y \} _{y \in X}$ in Lemma \ref{measure}.

\begin{proposition}\label{function}
Let $G$ be a totally disconnected locally compact group and $X$ be a compact $G$-space.
Assume that the stabilizer map $X \ni x \mapsto G_x \in \Sub{G}$ is continuous.
If the action $G \curvearrowright X$ is not free, then there is a non-zero function $f_0 \in C_c (G, C(X))$ such that $\displaystyle \int _{G_y} [f_0 (sh)](y) \Delta _G (h) ^{\frac{1}{2}} \Delta _{G_y} (h)^{- \frac{1}{2}} \, d \mu_{y}(h) =0$ for every $y \in X$ and $s \in G$ where $\Delta _G , \Delta _{G_y}$ are the modular functions of $G, G_y$ respectively.
\end{proposition}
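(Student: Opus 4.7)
The plan is to write $f_0$ as $\eta(y)[C(y)\chi_K(g)-\chi_{Ks_0}(g)]$ for an appropriate cut-off $\eta$, a continuous function $C\colon X\to\mathbb{R}_{\geq0}$, and a compact open subgroup $K$ of $G$. Since the action is not free, I would pick $y_0\in X$ with $G_{y_0}\neq\{e\}$, pick $s_0\in G_{y_0}\setminus\{e\}$, and use total disconnectedness of $G$ to choose a compact open subgroup $K<G$ with $s_0\notin K$. Set $V:=\{y\in X:G_y\cap Ks_0\neq\emptyset\}$; this is open in $X$, because $\{H\in\Sub{G}:H\cap Ks_0\neq\emptyset\}$ is open in the Chabauty topology (as $Ks_0$ is open in $G$) and the stabilizer map is continuous. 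Moreover $y_0\in V$ since $s_0=e\cdot s_0\in Ks_0\cap G_{y_0}$.

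I would then fix $\eta\in C(X)$ with $\supp{\eta}\subset V$ and $\eta(y_0)=1$ (by Urysohn) and set
\[
C(y):=\Delta_G(s_0)^{1/2}\left[\frac{\mu_y(G_y\cap Ks_0)}{\mu_y(G_y\cap K)}\right]^{1/2},\qquad f_0(g,y):=\eta(y)\bigl[C(y)\chi_K(g)-\chi_{Ks_0}(g)\bigr].
\]
Both maps $y\mapsto\mu_y(G_y\cap K)$ and $y\mapsto\mu_y(G_y\cap Ks_0)$ are continuous on $X$ by Lemma \ref{measure} applied to $\chi_K,\chi_{Ks_0}\in C_c(G)$, with the denominator positive (as $e\in G_y\cap K$ is an open neighborhood in $G_y$), so $C$ is continuous on $X$ and vanishes outside $V$. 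It is routine that $f_0\in C_c(G,C(X))$ (support in $g$ is $K\sqcup Ks_0$, disjoint since $s_0\notin K$), and $f_0(e,y_0)=C(y_0)>0$ shows $f_0\neq 0$.

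To verify $\int_{G_y}f_0(sh,y)\rho_y(h)d\mu_y(h)=0$ for all $(s,y)$, writing $\rho_y(h):=\Delta_G(h)^{1/2}\Delta_{G_y}(h)^{-1/2}$, the case $y\notin V$ is immediate from $\eta(y)=0$. For $y\in V$, pick any $\tau\in G_y\cap Ks_0$ and write $\tau=k_0s_0$ with $k_0\in K$. The change of variables $h=h'\tau$ stays in $G_y$ (since $\tau\in G_y$) and is a bijection from $G_y\cap s^{-1}K$ onto $G_y\cap s^{-1}Ks_0$; one checks $\chi_{Ks_0}(sh'\tau)=\chi_K(sh')$ because $sh'\tau s_0^{-1}=sh'k_0\in K\iff sh'\in K$. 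Combining this with $\rho_y(h'\tau)=\rho_y(h')\rho_y(\tau)$ and $d\mu_y(h)=\Delta_{G_y}(\tau)d\mu_y(h')$ under the substitution gives
\[
\int_{G_y}\chi_{Ks_0}(sh)\rho_y(h)d\mu_y(h)=\rho_y(\tau)\Delta_{G_y}(\tau)\int_{G_y}\chi_K(sh')\rho_y(h')d\mu_y(h'),
\]
and the prefactor simplifies to $\Delta_G(\tau)^{1/2}\Delta_{G_y}(\tau)^{1/2}=\Delta_G(s_0)^{1/2}\Delta_{G_y}(\tau)^{1/2}$, the last equality holding because $\Delta_G$ is trivial on the compact subgroup $K$.

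The main obstacle is that $\tau$ is a non-canonical selection, while $C(y)$ must depend continuously on $y$ alone; this is resolved by two observations. First, $\Delta_{G_y}(\tau)$ depends only on $y$: for any $\tau,\tau'\in G_y\cap Ks_0$, the ratio $\tau\tau'^{-1}\in K\cap G_y$ lies in a compact subgroup of $G_y$, on which the modular function is trivial. Second, applying the defining property $\mu_y(A\tau)=\Delta_{G_y}(\tau)\mu_y(A)$ to $A=G_y\cap K$ and noting $(G_y\cap K)\tau=G_y\cap K\tau=G_y\cap Ks_0$ (using $Kk_0=K$) yields $\Delta_{G_y}(\tau)=\mu_y(G_y\cap Ks_0)/\mu_y(G_y\cap K)$. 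This makes $\Delta_{G_y}(\tau)^{1/2}$ a manifestly $y$-continuous quantity equal to $C(y)/\Delta_G(s_0)^{1/2}$, producing the desired cancellation $[C(y)-C(y)]\int_{G_y}\chi_K(sh')\rho_y(h')d\mu_y(h')=0$ inside $f_0$.
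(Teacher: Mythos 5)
Your construction is correct, and it is genuinely different from the paper's. The paper builds $f_0$ in three stages: it antisymmetrizes two bump functions $f_1,f_2\in C_c(G)$ concentrated near two distinct elements of a nontrivial stabilizer to get $\eta_0$ with $\int_{G_y}[\eta_0(h)](y)\,d\mu_y(h)=0$ for all $y$; it then twists by a Tietze extension of the function $(g,y)\mapsto \Delta_{G_y}(g)^{-1/2}$ (shown continuous on the isotropy set via Lemma \ref{measure}) together with $\Delta_G(\cdot)^{-1/2}$ to absorb the modular factors; and finally it convolves on the right over $G_y$ with a function $\zeta\in C_c(G)$, which is what upgrades the vanishing from $s=e$ to all left translates $s\in G$. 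You instead write down a closed-form $f_0$ supported on the two disjoint clopen sets $K$ and $Ks_0$, with the weight $C(y)$ expressed directly as $\Delta_G(s_0)^{1/2}\bigl(\mu_y(G_y\cap Ks_0)/\mu_y(G_y\cap K)\bigr)^{1/2}$ — manifestly continuous by Lemma \ref{measure} and with nonvanishing denominator since $G_y\cap K$ is a nonempty open subset of $G_y$. Your key structural gain is that the cancellation comes from the \emph{right} translation $h\mapsto h\tau$ inside $G_y$, which commutes with left translation by $s$, so the condition for all $s$ is automatic and the convolution step disappears; likewise the identity $\Delta_{G_y}(\tau)=\mu_y(G_y\cap Ks_0)/\mu_y(G_y\cap K)$ replaces the Tietze extension. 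The trade-off is that your argument leans more heavily on total disconnectedness (the compact open subgroup $K$ is essential to making $\chi_K,\chi_{Ks_0}$ continuous and to trivializing $\Delta_G$ and $\Delta_{G_y}$ on $K$ and $K\cap G_y$), whereas the paper's antisymmetrization scheme is the more systematic template; but since the proposition is stated only for totally disconnected groups (and the paper's own continuity argument for $\Delta_{G_y}$ already invokes a compact open subgroup), nothing is lost. All the individual steps — openness of $V$ in the Chabauty topology, $(G_y\cap K)\tau=G_y\cap Ks_0$, the substitution rule $d\mu_y(h'\tau)=\Delta_{G_y}(\tau)\,d\mu_y(h')$, and the simplification $\Delta_G(\tau)=\Delta_G(s_0)$ — check out.
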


\begin{proof}
Since the action $G \curvearrowright X$ is not free, there exists $x \in X$ such that $G_x$ is non-trivial.
Take two distinct points $g_1, g_2 \in G_x$ and two positive functions $f_1, f_2 \in C_c (G)$ such that $\supp{f_1} \cap \supp{f_2} = \emptyset$ and $f_i (g_i) > 0$ for $i = 1, 2$.
Define $\eta _0$ by
\[
[\eta _0 (s)] (y) = f_1 (s) \int _{G_y} f_2 (h) \, d \mu _y (h) - f_2 (s)  \int _{G_y} f_1 (h) \, d \mu _y (h).
\]
Then $\eta _0 \in C_c (G, C(X))$, $[\eta _0 (\cdot)](x)| _{G_x} \neq 0$, and $\int _{G_y} [\eta_0 (h)] (y) \, d \mu _y (h) = 0$ for every $y \in X$.
Define a map $\tau \colon \{ (g, y) \in G \times X \mid g.y=y \} \to \mathbb{R}$ by $\tau (g, y) := \Delta _{G_y}(g) ^{-\frac{1}{2}}$.
This map is continuous.
Indeed, fix some compact open subgroup $K$ of $G$ and assume $(g_i, y_i) \to (g, y)$ in $\{ (g, y) \in G \times X \mid g.y=y \}$.
Then
\begin{align*}
\mu _{y_i} (K \cap G_{y_i}) = \int _{G_{y_i}} \chi _K (h) \, d \mu _{y_i} (h) \to \int _{G_y} \chi _K (h) \, d \mu _y (h) = \mu _y (K \cap G_y)
\end{align*}
and
\begin{align*}
| \mu _{y_i} (K g_i \cap G_{y_i}) - \mu _y (Kg \cap G_y)| &= \left| \int _{G_{y_i}} \chi _{K{g_i}} (h) \, d \mu _{y_i} (h) - \int _{G_y} \chi _{Kg} (h) \, d \mu _y (h) \right| \\
&\leq \left| \int _{G_{y_i}} \chi _{K{g_i}} (h) \, d \mu _{y_i} (h) - \int _{G_{y_i}} \chi _{K g} (h) \, d \mu _{y_i} (h) \right| \\
&\  + \left| \int _{G_{y_i}} \chi _{K g} (h) \, d \mu _{y_i} (h) - \int _{G_y} \chi _{Kg} (h) \, d \mu _y (h) \right|
\end{align*}
hold.
Since $g_i \in Kg$ eventually, $K g_i = K g$ eventually and one has 
\begin{align*}
\left| \int _{G_{y_i}} \chi _{K{g_i}} (h) \, d \mu _{y_i} (h) - \int _{G_{y_i}} \chi _{K g} (h) \, d \mu _{y_i} (h) \right| \to 0.
\end{align*}
Moreover, by the definition of $\{ \mu _y \} _{y \in X}$, one has
\[
\left| \int _{G_{y_i}} \chi _{K g} (h) \, d \mu _{y_i} (h) - \int _{G_y} \chi _{Kg} (h) \, d \mu _y (h) \right| \to 0.
\]
Therefore, we have
\[
\tau (g_i, y_i) = \left( \frac{\mu _{y_i} (K \cap G_{y_i})}{\mu _{y_i} (K g_i \cap G_{y_i})} \right) ^{\frac{1}{2}} \to \left( \frac{\mu _y (K \cap G_y)}{\mu _y (Kg \cap G_y)} \right) ^{\frac{1}{2}} = \tau (g,y).
\]
Let $\tilde{\tau} \in C(G \times X)$ be an extension of $\tau$ whose existence is ensured by the Tietze extension theorem.
Define $\eta$ by $[\eta (h)] (y) := \tilde{\tau} (h,y) \Delta _G (h)^{-\frac{1}{2}} [\eta _0 (h)] (y)$.
Then $\eta \in C_c (G, C(X))$ and $[\eta (\cdot)](x)| _{G_x} \neq 0$.
By the Tietze extension theorem, one can find a non-zero function $\zeta \in C_c (G)$ such that 
\[
\int _{G_x} \zeta (k^{-1}) [\eta (k)] (x) \, d \mu _x (k) \neq 0.
\]
Define $f_0$ by $\displaystyle [f_0 (h)](y) := \int _{G_y} \zeta (hk^{-1}) [\eta (k)](y) \, d \mu _y (k)$.
Then $f_0 \in C_c (G, C(X))$ and $f_0 \neq 0$ since $[f_0(e)] (x) \neq 0$.
For $y \in X$ and $s \in G$, 
\begin{align*}
&\int _{G_y} [f_0 (sh)](y) \Delta _G (h) ^{\frac{1}{2}} \Delta _{G_y} (h)^{- \frac{1}{2}} \, d \mu_{y}(h) \\
&= \int _{G_y} \int _{G_y} \zeta (shk^{-1}) [\eta (k)] (y) \Delta _G (h) ^{\frac{1}{2}} \Delta _{G_y} (h)^{- \frac{1}{2}} \, d \mu _y (k) \, d \mu _y (h) \\
&= \int _{G_y} \left( \int _{G_y} [ \eta _0 (l h) ](y) \, d \mu _y (h) \right) \zeta (sl^{-1}) \Delta _{G}(l)^{- \frac{1}{2}} \Delta _{G_y}(l) ^{-\frac{1}{2}} \, d \mu _y (l) \\
&= 0
\end{align*}
holds and this $f_0$ has the desired property.
\end{proof}

\begin{proof}[Proof of Theorem \ref{converse}]
Suppose that the action $G \curvearrowright X$ is not free.
Fix some $x \in X$ such that $G_x$ is non-trivial.
Let $\rho$ be a rho-function for $(G, G_x)$ and $\nu$ be the associated quasi-invariant regular Borel measure on $G/G_x$; they satisfy $\displaystyle \frac{d g. \nu}{d \nu} (a G_x) = \frac{\rho (g^{-1}a)}{\rho (a)}$.
Define a $*$-homomorphism $\pi \colon C(X) \to B(L^2( G/G_x ))$ and a group homomorphism $u \colon G \to \mathcal{U} (L^2(G/G_x))$ by the relations
\begin{align*} 
&[\pi (f) \xi](a G_x) := f(a.x) \xi (a G_x), \\
&[u_g \xi](a G_x) := \left( \displaystyle \frac{d g.\nu}{d \nu} (a G_x) \right) ^{\frac{1}{2}} \xi (g^{-1}a G_x).
\end{align*}
Then this is a covariant representation and we obtain a $*$-homomorphism $\tilde{\pi} \colon C(X) \rtimes _r G \to B(L^2 (G / G_x))$ from $\pi$ and $u$ since the action $G \curvearrowright X$ is amenable.
We will show that $\tilde{\pi}$ is not faithful.

By Proposition \ref{function}, there is a non-zero function $f_0 \in C_c (G, C(X))$ such that $\displaystyle \int _{G_y} [f_0 (sh)](y) \Delta _G (h) ^{\frac{1}{2}} \Delta _{G_y} (h)^{- \frac{1}{2}} \, d \mu_{y}(h) =0$ for every $y \in X$ and $s \in G$.
Let $[f(s)](y) = [f_0 (s)](s^{-1}.y)$ and we will show that $\tilde{\pi} (f) = 0$.
Take $\xi \in C_c (G/G_x) \subset L^2 (G/G_x)$ arbitrarily.
Then for $aG_x \in G/G_x$, one has
\begin{align*}
&[\tilde{\pi} (f) \xi] (a G_x) \\ 
&= \int _G [f(g)](a.x) \left( \displaystyle \frac{d g.\nu}{d \nu} (a G_x) \right) ^{\frac{1}{2}} \xi (g^{-1}a G_x) \, d\mu (g) \\
&= \int _G [f(ag^{-1})](a.x) \left( \frac{\rho (g)}{ \rho (a)} \right) ^{\frac{1}{2}} \Delta _G (g)^{-1} \xi (g G_x) \, d \mu (g) \\
&= \int _{G/G_x} \int _{G_x} [f(ah^{-1}b^{-1})](a.x) \left( \frac{\rho (bh)}{ \rho (a)} \right) ^{\frac{1}{2}} \Delta _G (bh)^{-1} \xi (b G_x) \rho (bh)^{-1} \, d \mu _x (h) \, d \nu (b G_x) \\
&= \int _{G/G_x} \left( \int _{G_x} [f_0 (ah^{-1}b^{-1})](b.x) (\Delta _G (h) \Delta _{G_x} (h)) ^{- \frac{1}{2}} \, d \mu _x (h) \right) (\rho (a) \rho (b))^{-\frac{1}{2}} \Delta _G (b)^{-1} \xi (b G_x) \, d \nu (b G_x)
\end{align*}
where the third equality uses the following formula; for $\phi \in C_c (G)$,
\[
\displaystyle \int _G \varphi (g) \, dg = \int _{G/G_x} \int _{G_x} \varphi (bh) \rho (bh)^{-1} \, d \mu _x (h) \, d \nu (bG_x)
\]
and the fourth equality uses the following formula; for $b \in G$ and $h \in G_x$, 
\[
\rho(bh) = \frac{\Delta _{G_x} (h)}{\Delta _G (h)} \rho (b).
\]
For each $b \in G$,
\begin{align*}
&\int _{G_x}  [f_0(ah^{-1}b^{-1})](b.x) (\Delta _G (h) \Delta _{G_x} (h)) ^{- \frac{1}{2}} \, d \mu _x (h) \\
&= \int _{G_x} [f_0(ahb^{-1})](b.x) \Delta _G (h)^{\frac{1}{2}} \Delta _{G_x} (h) ^{- \frac{1}{2}} \, d \mu _x (h) \\
&= c_b \int _{G_{b.x}}  [f_0(ab^{-1}k)](b.x) \Delta _G (k) ^{\frac{1}{2}} \Delta _{G_{b.x}} (k) ^{- \frac{1}{2}} \, d \mu _{b.x} (k) \\
&= 0
\end{align*}
holds where $c_b$ is defined as follows; for $b \in G$, $(\Ad{b})_{*} \mu _x$ is a left Haar measure on $G_{b.x}$ where $\Ad{b} \colon G_x \to G_{b.x}$ is the conjugation.
A positive number $c_b > 0$ is defined as satisfying $(\Ad{b})_{*} \mu _x = c_b \mu _{b.x}$.
Therefore, $\tilde{\pi}(f) = 0$ and $C(X) \rtimes _r G$ is not simple.
\end{proof}

\begin{corollary}\label{conversecor}
Let $G$ be an exact totally disconnected locally compact group.
If $C( \partial _F G ) \rtimes _r G$ is simple, then the action $G \curvearrowright \partial _F G$ is free.
\end{corollary}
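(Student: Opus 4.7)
The plan is to deduce Corollary \ref{conversecor} directly from Theorem \ref{converse} applied to $X = \partial_F G$. For this I need to verify the two standing hypotheses of Theorem \ref{converse} in this setting: amenability of the action and continuity of the stabilizer map.

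First I would invoke Theorem \ref{Furstenbergamenable}, which is exactly the statement that for an exact locally compact group $G$ the action $G \curvearrowright \partial_F G$ is amenable. This takes care of hypothesis one. For the second hypothesis, I would appeal to \cite[Corollary 1.3.]{LeBoudecTsankov}, recalled in the preliminaries, which asserts that the stabilizer map $\partial_F G \ni x \mapsto G_x \in \Sub{G}$ is continuous.

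With both hypotheses verified, and with the simplicity of $C(\partial_F G) \rtimes_r G$ supplied by the assumption, Theorem \ref{converse} applied to $X = \partial_F G$ immediately yields freeness of $G \curvearrowright \partial_F G$. Since no further construction is needed, there is really no obstacle here: the corollary is a packaging of Theorem \ref{converse} together with the two external inputs above. The only thing to be mildly careful about is that \cite[Corollary 1.3.]{LeBoudecTsankov} is applied for the Furstenberg boundary (not for an arbitrary boundary) and that Theorem \ref{Furstenbergamenable} requires exactness, both of which are in place.
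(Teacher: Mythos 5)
Your proposal is correct and follows exactly the paper's own argument: amenability of $G \curvearrowright \partial_F G$ from exactness via Theorem \ref{Furstenbergamenable}, continuity of the stabilizer map from \cite[Corollary 1.3.]{LeBoudecTsankov}, and then Theorem \ref{converse}. Nothing is missing.
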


\begin{proof}
Since $G$ is exact, the action $G \curvearrowright \partial _F G$ is amenable by Corollary \ref{Furstenbergamenable}.
By \cite[Corollary 1.3.]{LeBoudecTsankov}, the stabilizer map associated with the Furstenberg boundary is continuous.
Therefore, the simplicity of the crossed product $C(\partial _F G) \rtimes _r G$ implies the freeness of the action $G \curvearrowright \partial _F G$ by Theorem \ref{converse}.
\end{proof}



\begin{thebibliography}{99}
\bibitem{Delaroche} C. Anantharaman-Delaroche; Amenability and exactness for dynamical systems and their $\mathrm{C}^*$-algebras. \emph{Trans. Amer. Math. Soc.} 354.10 (2002), 4153--4178.

\bibitem{ArchboldSpielberg} R. J. Archbold and J. S. Spielberg; Topologically free actions and ideals in discrete $\mathrm{C}^*$-dynamical systems. \emph{Proc. Edinburgh Math. Soc.} 37.1 (1994), 119--124.

\bibitem{BreuillardKalantarKennedyOzawa} E. Breuillard, M. Kalantar, M. Kennedy, and N. Ozawa; $\mathrm{C}^*$-simplicity and the unique trace property for discrete groups. \emph{Publ. Math. Inst. Hautes \'{E}tudes Sci.} 126 (2017), 35--71.

\bibitem{BrodzkiCaveLi} J. Brodzki, C. Cave, and K. Li; Exactness of locally compact groups. \emph{Adv. Math.} 312 (2017), 209--233.

\bibitem{BrownOzawa} N. P. Brown and N. Ozawa; $\mathrm{C}^*$-algebras and Finite-Dimensional Approximations. \emph{Graduate Studies in Mathematics}, 88. American Mathematical Society, Providence, RI, 2008.

\bibitem{CapraceLeBoudecMatteBon} P.-E. Caprace, A. Le Boudec, and N. Matte Bon; Piecewise strongly proximal actions, free boundaries and the Neretin groups. \emph{Bull. Soc. Math. France}, 150.4 (2022), 773--795.

\bibitem{CornulierdelaHarpe} Y. Cornulier and P. de la Harpe; Metric geometry of locally compact groups. \emph{EMS Tracts in Mathematics}, 25. European Mathematical Society, Z\"{u}rich, 2016.

\bibitem{delaHarpePreaux} P. de la Harpe and J.-P. Pr\'{e}aux; $\mathrm{C}^*$-simple groups: amalgamated free products, HNN extensions, and fundamental groups of 3-manifolds. \emph{J. Topol. Anal.} 3.4 (2011), 451--489.

\bibitem{ExelLacaQuigg} R. Exel, M. Laca, and J. Quigg; Partial dynamical systems and $\mathrm{C}^*$-algebras generated by partial isometries. \emph{J. Operator Theory}, 47 (2002), 169--186.

\bibitem{Furstenberg} H. Furstenberg; Boundary theory and stochastic processes on homogeneous spaces. \emph{Harmonic analysis on homogeneous spaces} (Proc. Sympos. Pure Math., Vol. XXVI, Williams Coll., Williamstown, Mass., 1972), 193--229. Amer. Math. Soc., Providence, R.I., 1973.

\bibitem{Glimm} J. Glimm; Families of induced representations. \emph{Pacific J. Math.} 12 (1962), 885--911.

\bibitem{KawamuraTomiyama} S. Kawamura and J. Tomiyama; Properties of topological dynamical systems and corresponding $\mathrm{C}^*$-algebras. \emph{Tokyo J. Math.} 13.2 (1990), 251--257.

\bibitem{KalantarKennedy} M. Kalantar and M. Kennedy; Boundaries of reduced $\mathrm{C}^*$-algebras of discrete groups. \emph{J. Reine Angew. Math.} 727 (2017), 247--267.

\bibitem{LeBoudecTsankov} A. Le Boudec and T. Tsankov; Continuity of the stabilizer map and irreducible extensions. \emph{Comment. Math. Helv.}, to appear.

\bibitem{Powers} R. T. Powers; Simplicity of the $\mathrm{C}^*$-algebra associated with the free group on two generators. \emph{Duke Math. J.} 42 (1975), 151--156.


\bibitem{Raum} S. Raum; $\mathrm{C}^*$-simplicity of locally compact Powers groups. \emph{J. Reine Angew. Math.} 748 (2019), 173-205.

\bibitem{Raumerra} S. Raum; Erratum to $\mathrm{C}^*$-simplicity of locally compact Powers groups (J. reine angew. Math. 748 (2019), 173-205). \emph{J. Reine Angew. Math.} 772 (2021), 223--225.

\bibitem{Sierakowski} A. Sierakowski; The ideal structure of reduced crossed products. \emph{M\"{u}nster J. Math.} 3 (2010), 237--261.

\bibitem{Suzuki} Y. Suzuki; Elementary constructions of non-discrete $\mathrm{C}^*$-simple groups. \emph{Proc. Amer. Math. Soc.} 145.3 (2017), 1369--1371.

\bibitem{Suzuki2} Y. Suzuki; $\mathrm{C}^*$-simplicity has no local obstruction. \emph{Forum Math. Sigma}, 10 (2022), e18.

\bibitem{vanDantzig} D. Van Dantzig; Zur topologischen Algebra. III. Brouwersche und Cantorsche
Gruppen. \emph{Compositio Math.} 3 (1936), 408--426.

\end{thebibliography}
\end{document}